\newtheorem{theorem}{Theorem}[section]
\newtheorem{lemma}[theorem]{Lemma}
\newtheorem{proposition}[theorem]{Proposition}
\newtheorem{definition}[theorem]{Definition}
\theoremstyle{definition}
\newtheorem{example}[theorem]{Example}
\theoremstyle{remark}
\numberwithin{equation}{section}
\newcommand{\eps}{\varepsilon}
\def\beq{\begin{equation}}
\def\eeq{\end{equation}}
\def\Dio{{\rm Dio}}
\def\dio{{\rm dio}} 
\def\ice{{\rm ice}}
\def\rep{{\rm rep}}
\def\bfx{{\mathbf x}}
\def\bfz{{\mathbf z}}
\def\cG{{\mathcal G}}
\begin{document}

\title[irrationality exponent of numbers with low complexity expansion]{On the irrationality exponent of real numbers with low complexity expansion}

\author{Yann Bugeaud}
\address{I.R.M.A., UMR 7501, Universit\'e de Strasbourg et CNRS, 7 rue Ren\'e Descartes, 67084 Strasbourg Cedex, France}
\address{Institut universitaire de France}
\email{bugeaud@math.unistra.fr}

\author{Hajime Kaneko}
\address{Institute of Mathematics, University of Tsukuba, 1-1-1 Tenodai, Tsukuba, Ibaraki, 305-8571, Japan}
\email{kanekoha@math.tsukuba.ac.jp}

\author{Dong Han Kim}
\address{Department of Mathematics Education,
Dongguk University, 30 Pildong-ro 1-gil, Jung-gu, Seoul 04620, Korea.}
\email{kim2010@dgu.ac.kr}

\begin{abstract}
Let $\xi$ be a real number and $b \ge 2$ an integer.
We study the relationship between the irrationality exponent of $\xi$ and the subword complexity $p(n, \bfx)$ of the $b$-ary expansion $\bfx$ of $\xi$, where 
$p(n, \bfx)$ counts the number of distinct blocks of length $n$ in $\bfx$, for $n \ge 1$. 
If the irrationality exponent of $\xi$ is equal to $2$, 
which is the case for almost all real numbers $\xi$, 
we show that the limit superior of the sequence $(p(n, \bfx) / n)_{n \ge 1}$  
is at least equal to 4/3. 
The proof is based on a careful study of the evolution of the 
Rauzy graphs of infinite words of low complexity. 
\end{abstract}

\subjclass[2020]{11A63,	11J82 (primary); 68R15 (secondary)}

\keywords{irrationality exponent, subword complexity, $b$-ary expansion}


\def\Dio{{\rm Dio}}
\def\dio{{\rm dio}} 
\def\ice{{\rm ice}}
\def\rep{{\rm rep}}
\def\Card{{\rm Card}}

\def\bfa{{\bf a}}

\maketitle

\section{Introduction}

Let $\xi$ be an irrational real number.
Its irrationality exponent $\mu(\xi)$ is the supremum of the real numbers $\mu$ for which the inequality
$$
\left| \xi - \frac pq \right| < \frac{1}{q^\mu} 
$$
has infinitely many rational solutions $p/q$ with $q \ge 1$. 
It follows from the theory of continued fractions that $\mu(\xi)$ is always at least equal to $2$ and an easy 
covering argument shows that equality holds for almost all $\xi$ with respect to the Lebesgue measure.
Recently, Bugeaud and Kim \cite{BuKi17}  established a
connection between the irrationality exponent of a 
real number and the complexity of its expansion in a given integer base.  
Before stating their result, let us introduce some notions from combinatorics on words. 

Let $\mathcal A$ be a finite set called an alphabet and denote by $|\mathcal A|$ its cardinality.
A word over $\mathcal A$ is a finite or infinite sequence of elements of $\mathcal A$.
For a (finite or infinite) word ${\bf x} = x_1 x_2 \ldots$ written over $\mathcal A$,
let $n \mapsto p (n, {\bf x})$ denote its subword complexity function of $\bf x$ which counts the number of different subwords of length $n$ occurring in $\mathbf x$, that is,
$$
p (n,{\bf x}) = {\rm Card} \{ x_{j+1} x_{j+2} \dots x_{j+n} : j \ge 0 \}, \quad n \ge 1.
$$
Clearly, we have
$$
1 \le p(n, {\bf x}) \le |\mathcal A|^n, \quad n \ge 1.
$$
We set $p(0,\bfx)=1$ for convenience.
The Morse-Hedlund Theorem \cite{MoHe40} states that 
if ${\bf x}$ is ultimately periodic, then there exists an integer $C$ such that $p(n, {\bf x}) \le C$ for $n \ge 0$; 
otherwise, we have
\begin{equation}\label{eq:MH}
p(n+1, {\bf x}) \ge p(n, {\bf x}) + 1, \quad n \ge 0,  
\end{equation}
thus $p(n, {\bf x}) \ge n+1$ for $n \ge 0$. 
There exist uncountably many infinite words 
${\bf s}$ over $\{0, 1\}$ such that $p(n, {\bf s}) = n+1$ for $n \ge 0$. 
These words are called Sturmian words. 
Classical references on combinatorics on words and
on Sturmian sequences include \cites{Fogg02,Loth02,AlSh03}. 
The words $\bfx$ for which there exists an integer $C$ such that $p(n, {\bf x}) \le n + C$ for $n \ge 0$ are called quasi-Sturmian. 
See \cite{Cass98} for more details.

Let $\xi$ be a real number. 
Let $b$ be an integer greater than or equal to $2$. 
There exists a unique infinite sequence 
$(x_j)_{j \ge 1}$ of integers from $\{0, 1, \ldots, b-1\}$, called the $b$-ary expansion of $\xi$, such that
\begin{equation}  \label{expbase}
\xi = \lfloor \xi \rfloor + \sum_{j\ge 1} \, \frac{x_j}{b^j}    
\end{equation}
and $x_j \not= b-1$ for infinitely many indices $j$. 
Here, $\lfloor \cdot \rfloor$ denotes the integer part function. 
The sequence $(x_j)_{j \ge 1}$ is ultimately periodic if, and only if, $\xi$ is rational. 
A natural way to measure the
complexity of the real number $\xi$ written in base $b$ 
as in \eqref{expbase} is to count the number of
distinct blocks of given length in the infinite word ${\bf x} = x_1 x_2 \ldots$. 
We set
$$
p(n, \xi, b) = p (n, \bfx), \quad n \ge 1,
$$
and call the function $n \mapsto p(n, \xi, b)$ the complexity function of $\xi$ in base $b$. 

Adamczewski \cite{Adam10} has observed that the statements on the combinatorial structure of 
Sturmian words established in \cite{BeHoZa06} and \cite{AdBu11} almost immediately imply that the irrationality exponent of any real number, whose sequence of digits in some integer base is quasi-Sturmian, is greater than $2$. 
The main result of \cite{BuKi17} asserts that, if the complexity function of $\xi$ in base $b$ grows sufficiently slowly, then the sequence of digits of $\xi$ contains very large 
repetitions, that causes the irrationality exponent of $\xi$ to exceed $2$. 

\begin{theorem}[\cite{BuKi17}] \label{Pisa} 
Let $b \ge 2$ be an integer and $\xi$ an irrational real number.
If $\mu$ denotes the irrationality exponent of $\xi$, then
$$
\liminf_{n \to + \infty} \, \frac{p(n, \xi, b)}{n} \ge 
1 +  \frac{1 - 2 \mu (\mu - 1) (\mu - 2)}{\mu^3 (\mu - 1)} 
$$
and
$$
\limsup_{n \to + \infty} \, \frac{p(n, \xi, b)}{n}  
\ge  1  +  \frac{1 - 2 \mu (\mu - 1) (\mu - 2)}{3 \mu^3 - 6 \mu^2 + 4 \mu - 1}.  
$$
\end{theorem}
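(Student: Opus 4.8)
The plan is to pit the repetitive structure of $\bfx$ against its complexity: I transfer repetitions in $\bfx$ into good rational approximations of $\xi$, so that the hypothesis $\mu(\xi)=\mu$ caps how repetitive $\bfx$ can be, and then show that a word this un-repetitive is forced to have many factors. Since $\xi$ is irrational, $\bfx$ is aperiodic, so by \eqref{eq:MH} the increments $s(n):=p(n+1,\bfx)-p(n,\bfx)$ satisfy $s(n)\ge 1$, and $p(n,\bfx)=1+\sum_{m=0}^{n-1}s(m)$. Consequently $\limsup_n \frac{p(n,\bfx)}{n}-1=\limsup_n \frac1n\sum_{m<n}(s(m)-1)$, and likewise for the liminf, so both assertions reduce to lower bounds on the averages $\frac1n\sum_{m<n}(s(m)-1)$ of the excess increments.

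First I would show that repetitions force approximations. If a factor of length $N$ and smallest period $T$ occurs at position $s$, write it as $V^{N/T}$ with $|V|=T$; then the fractional part $\{b^{s}\xi\}$ begins with this block, and the rational with purely periodic $b$-ary expansion $\overline{V}$ has denominator $q$ dividing $b^{T}-1$, so $\log_b q\le T$ and $|\{b^{s}\xi\}-p/q|<b^{-N}\le q^{-N/T}$. Since $\mu(b^{s}\xi)=\mu(\xi)$, since reducing $p/q$ only raises the exponent, and since the denominators grow (so the approximations are eventually distinct) as $N\to\infty$, any sequence of repetitions whose exponents $N/T$ tend to $\rho$ yields $\mu(\xi)\ge\rho$. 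With $\mu(\xi)=\mu$ this gives the crucial cap: every factor of $\bfx$ of length $N$ and period $T$ satisfies $N/T\le\mu+o(1)$ as $N\to\infty$. This step is bookkeeping; the one delicate point is guaranteeing infinitely many genuinely distinct approximations.

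The substance is the converse estimate: a word all of whose factors have repetition exponent $\le\mu$ must have large averages of $s(m)-1$. Here I would track the Rauzy graphs $\cG_n$, with the $p(n,\bfx)$ factors of length $n$ as vertices and the $p(n+1,\bfx)$ factors of length $n+1$ as edges, so that $s(n)=\sum_{v}(\deg^{+}(v)-1)$ is the total out-branching of $\cG_n$. A factor of period $T$ is a closed walk of length $T$ in $\cG_n$, and a repetition of exponent $\rho$ is such a cycle traversed about $\rho$ times before the coding walk leaves it through a branching vertex; the cap $N/T\le\mu$ therefore forbids the infinite walk coding $\bfx$ from lingering in a short cycle of $\cG_n$ for more than about $\mu$ turns. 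Forcing the walk to meet branching vertices at a controlled rate bounds the $s(n)$ below on average, and averaging over $n$ produces the liminf estimate, while isolating the scales at which a longest repetition is broken — where several new factors must be created at once — produces the larger limsup estimate.

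The hard part is to make this accounting sharp, because the exact rational functions of $\mu$ are dictated by the extremal word, namely the one that is as repetitive as $N/T\le\mu$ permits and so minimises complexity. One must optimise over all admissible profiles of cycle lengths and traversal multiplicities as $n$ grows, and the extremal profile is what yields the shared numerator $1-2\mu(\mu-1)(\mu-2)$ together with the two denominators. I expect the liminf and the limsup to be governed by genuinely different extremal profiles — a steady repetitive regime for the sustained (liminf) bound, and the transient complexity spike just after a maximal repetition terminates for the limsup bound — which is exactly why $\mu^{3}(\mu-1)$ and $3\mu^{3}-6\mu^{2}+4\mu-1$ disagree. Essentially all the work lies in this optimisation rather than in the Diophantine transfer of the second paragraph.
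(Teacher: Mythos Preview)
This theorem is not proved in the present paper; it is quoted from \cite{BuKi17} as background for the new results (Theorems~\ref{thm1} and~\ref{thm2}). There is therefore no proof here to compare your proposal against.

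As for the proposal itself, it is a strategy outline rather than a proof. The first step---converting a factor of length $N$ and period $T$ in $\bfx$ into a rational approximation of $\xi$ with exponent roughly $N/T$, and thereby capping repetition exponents by $\mu$---is standard and essentially correct (this is precisely the mechanism behind \cite{BuKi17}*{Lemma~3.6}, invoked at the end of the present paper). The second step, however, is entirely programmatic: you say you would ``track the Rauzy graphs'' and ``optimise over all admissible profiles of cycle lengths and traversal multiplicities,'' and you correctly acknowledge that ``essentially all the work lies in this optimisation,'' but none of it is carried out. No inequality linking $p(n,\bfx)/n$ to $\mu$ is actually derived; the specific rational functions in the statement are simply asserted to arise from an unspecified extremal analysis. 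This is not a gap in an argument so much as the absence of the argument.

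It is also worth noting that, according to the discussion following Theorem~\ref{thm2}, the method of \cite{BuKi17} does \emph{not} proceed via Rauzy graphs. It works instead with the repetition function $r(n,\bfx)$: one shows combinatorially that if $r(n,\bfx)\le(1+\delta)n$ for all large $n$ then $r(n,\bfx)\le(1-\delta')n$ for infinitely many $n$, and then invokes the Diophantine transfer to conclude $\mu(\xi)\ge 2+\delta''$; contrapositively, $\mu(\xi)$ close to $2$ forces $p(n,\bfx)\ge r(n,\bfx)\ge(1+\delta)n$ infinitely often. The Rauzy-graph analysis you gesture at is much closer in spirit to what \emph{this} paper does to prove the stronger Theorem~\ref{thm1}; that analysis fills Sections~3 and~4 and, as you can see there, is a long case-by-case study of how $\infty$-shaped reduced Rauzy graphs evolve, not a single clean optimisation.
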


The lower bound for the limsup obtained in Theorem \ref{Pisa} 
has been refined in \cite{BuKi25} as follows. 

\begin{theorem} \label{PisaBis}
Let $b \ge 2$ be an integer and $\xi$ an irrational real number.
If $\mu$ denotes the irrationality exponent of $\xi$, then 
$$
\liminf_{n \to + \infty} \, \frac{p(n, \xi, b)}{n} \ge 
1+ \frac{-\mu^3 + 2 \mu^2 + \mu - 1}{\mu^4 - 2 \mu^3 + 3 \mu^2 - 3 \mu + 1}
$$
and
\begin{equation} \label{lbPisaBis}
\limsup_{n \to + \infty} \, \frac{p(n, \xi, b)}{n}  
\ge  \frac{\mu + \sqrt{ 4(\mu - 1)^3 + \mu^2 } }{2 \mu (\mu - 1)}.
\end{equation}
\end{theorem}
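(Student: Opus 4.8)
The plan is to convert the Diophantine hypothesis on $\xi$ into a repetition structure on its $b$-ary expansion $\bfx$, and then to extract a lower bound for the number of factors from the evolution of the Rauzy graphs $G_n$ of $\bfx$. First I would invoke the transference principle underlying \cite{BuKi17}: a rational $p/q$ with $q$ prime to $b$ has a purely periodic $b$-ary expansion, and $|\xi-p/q|$ is small exactly when $\bfx$ agrees with this periodic expansion over a long prefix. Hence the hypothesis $\mu(\xi)=\mu$ produces infinitely many initial repetitions in $\bfx$: prefixes that agree with a purely periodic word $V_k^\omega$ of period $T_k$ over a length $\approx \mu T_k$, so that $\bfx$ begins with repetitions of exponent tending to $\mu$. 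The exact normalisation of this correspondence, together with the control of the (bounded) pre-periods coming from the $b$-part of the denominators, is what fixes the algebraic shape of the final bound.

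The heart of the argument is the evolution of the Rauzy graph. Recall that $G_n$ has $p(n,\bfx)$ vertices, namely the factors of length $n$, and $p(n+1,\bfx)$ edges, the factors of length $n+1$, so that $p(n+1,\bfx)-p(n,\bfx)$ equals the total excess out-degree over the right-special factors. An initial repetition of period $T_k$ and exponent $\approx\mu$ manifests, for all scales $n$ up to $\approx(\mu-1)T_k$, as a cycle of length $T_k$ that the infinite walk coding $\bfx$ traverses several times, and low complexity forces $G_n$ to be a small number of such cycles joined by paths with very few branch points. I would track how $G_n$ changes as $n$ grows from the scale of $V_k$ to that of $V_{k+1}$: winding around the current cycle is cheap, but the passage to the larger period $T_{k+1}$ forces the creation of new factors, equivalently new vertices, at a rate governed by the self-similar recursion relating $T_{k+1}$ to $T_k$, which is itself controlled by $\mu$.

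Summing the increments $p(n+1,\bfx)-p(n,\bfx)$, the number of factors of length $n$ near a critical scale $n\approx(\mu-1)T_k$ splits into a periodic contribution of size $\approx T_k$ and a transition contribution coming from the defects where the period-$T_k$ pattern must break in order to build $V_{k+1}$; several repetitions are active simultaneously, and it is their interaction that must be counted. Writing $y=\limsup_{n}p(n,\bfx)/n$ and optimising the trade-off between these two contributions across scales, one is led to the constraint
\[
\mu^2(\mu-1)\,y^2-\mu^2\,y-(\mu-1)^2\ \ge\ 0,
\]
whose largest root is exactly the right-hand side of \eqref{lbPisaBis}; since $y\ge 1$ this yields the $\limsup$ bound. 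The $\liminf$ bound fits the same framework, but with a worst-case estimate forced at every scale rather than an optimisation along a subsequence, which is why it comes out as a rational function of $\mu$ with no square root.

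The main obstacle is the precise bookkeeping in this Rauzy graph evolution. To reach the sharp quadratic rather than a weaker rational estimate such as the one in Theorem \ref{Pisa}, one must count \emph{exactly} the factors forced into existence during each transition between consecutive repetitions, controlling the bispecial factors, the possibility that several cycles coexist, and the overlaps between repetitions, instead of merely bounding these quantities. A secondary difficulty is that the $b$-ary expansion of $\xi$ need not be uniformly recurrent, so the cycle-and-branch picture has to be built on the factor structure itself, and one must verify that the bounded pre-periods appearing in the Diophantine dictionary do not perturb the exponents in the limit.
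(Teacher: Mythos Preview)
This theorem is not proved in the present paper; it is quoted from \cite{BuKi25}. The paper explicitly contrasts two methods: the approach of \cites{BuKi17,BuKi25}, which works with the repetition function $r(n,\bfx)$ and the combinatorics of overlapping prefix repetitions, and the Rauzy-graph evolution, which is the \emph{new} contribution of this paper and is used to prove Theorem~\ref{thm1}, not Theorem~\ref{PisaBis}. So there is no ``paper's own proof'' here to compare against, and the method actually behind Theorem~\ref{PisaBis} is not the one you outline.

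There is also a concrete gap in your plan. You claim the $\liminf$ bound ``fits the same framework'' as your Rauzy-graph argument, just with a worst-case estimate at every scale. The paper states precisely the opposite: ``While this approach gives better estimates for the limsup, it gives unfortunately no estimate for the liminf, unlike the approach followed in \cites{BuKi17,BuKi25}.'' The reason is structural: the Rauzy-graph evolution yields sharp information only at the special scales $n_i$ where $\mathcal G'_{n_i}$ is of $\infty$-shape (Lemma~\ref{pn+1}, Proposition~\ref{bge1}); between those scales one has no lower bound on $p(n,\bfx)$ strong enough to survive a $\liminf$. A Rauzy-graph proof of the first inequality in Theorem~\ref{PisaBis} would therefore need a genuinely new ingredient, not merely a pessimistic version of the $\limsup$ bookkeeping.

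Separately, the quadratic $\mu^2(\mu-1)y^2-\mu^2 y-(\mu-1)^2\ge 0$ is asserted rather than derived; ``optimising the trade-off'' is a placeholder, not an argument. The route in \cites{BuKi17,BuKi25}, as summarised after Theorem~\ref{thm2}, goes through the contrapositive: if $r(n,\bfx)\le(1+\delta)n$ for all large $n$, then a detailed analysis of how successive long prefix repetitions overlap forces $r(m,\bfx)\le(1-\delta')m$ infinitely often, whence $\mu(\xi)\ge 1+1/(1-\delta')$ by \cite{BuKi17}*{Lemma~3.6}. The algebra linking $\delta$, $\delta'$ and $\mu$ is the substance of the proof. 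Your opening paragraph is close to this in spirit, but the Rauzy-graph middle section is neither what \cite{BuKi25} does nor, as noted, adequate for both halves of the statement.
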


The purpose of the present paper is, 
by means of a totally different approach, 
to improve \eqref{lbPisaBis}.

\begin{theorem}\label{thm1}
Let $b \ge 2$ be an integer and $\xi$ an irrational real number.
Let $\mu$ denote the irrationality exponent of $\xi$.
If $\mu = 2$, then 
\begin{equation} \label{bestminorp}
\limsup_{n \to  + \infty} \frac{p(n, \xi, b)}{n} \ge \frac{4}{3}. 
\end{equation}
If $\mu > 2$, then 
\begin{equation} \label{bestminor}
\limsup_{n \to  + \infty} \frac{p(n, \xi, b)}{n} \ge 1 + 
\frac{\mu + 1 - \sqrt{81 (\mu-2)^2 - 10 \mu + 29}}{8(\mu - 2)}. 
\end{equation}
\end{theorem}

Note that the right-hand side of \eqref{bestminor} tends to $4/3$ when 
$\mu$ tends to $2$. Furthermore, it is greater than one (that is, it gives a 
non-trivial result) for $\mu < 2.2$. 
Observe also that Theorems \ref{Pisa} and \ref{PisaBis} give respectively the 
lower bounds 
$8/7$ and $(1 + \sqrt{2})/2 = 1.207 \ldots$ for 
the limit superior when $\mu = 2$. 
Thus, Theorem \ref{thm1} is noticeably stronger. 
As explained in \cite{BuKi25}, the method developed in \cite{BuKi17} 
and slightly refined in \cite{BuKi25} 
is unlikely to yield the result of the present paper.

Here, we obtain Theorem \ref{thm1} after a very detailled study of Rauzy graphs, introduced
by G\'erard Rauzy in 1983 \cite{Rauz83}. 
While this approach gives 
better estimates for the limsup, it gives unfortunately no estimate for the liminf, 
unlike the approach followed in \cites{BuKi17,BuKi25}. 

Theorem~\ref{thm1} is a restatement of Theorem~\ref{thm2} below, which involves 
the repetition function $n \mapsto r(n, \bfx)$ of an infinite word $\bfx$, defined 
for any positive integer $n$ by
\begin{align*}
r(n,\mathbf x) &= \max \{ m \ge 1 \, | \, x_1 \dots x_n, x_2 \dots x_{n+1}, \dots, x_{m} \dots x_{n+m-1} \text{ are all distinct} \} \\
&= \min \{ m \ge 1 \, | \, x_{m+1} \dots x_{m+n} = x_{i+1} \dots x_{i+n} \text{ for some } 0 \le i < m \}.
\end{align*}
Observe that the present definition of $r(n,\mathbf x)$ differs by $-n$ from that 
given in \cites{BuKi17,BuKi19,BuKi25} and that we have $p(n, \bfx) \ge r(n, \bfx)$ for $n \ge 1$. 

Theorem~\ref{thm2} asserts that if $p(n, \bfx)$ is sufficiently small for 
every large value of $n$, then there are some positive $\eps$ and 
arbitrarily large values of $n$ such that the prefix of length 
$\lfloor (2 - \eps) n \rfloor$ of $\bfx$ has two (necessarily overlapping) 
occurrences of a same block of length $n$. 

\begin{theorem}\label{thm2}
Let $b \ge 2$ be an integer. 
Let $\bfx = x_1 x_2 \ldots$ be an infinite word over $\{0, 1, \ldots , b-1\}$ which is not ultimately periodic and satisfies
$$
\rho = \limsup_{n \to  + \infty} \frac{p(n, \bfx)}{n} < \frac{4}{3}.
$$
Then, we have 
$$
\liminf_{n \to \infty} \frac{r(n,\mathbf x)}{n} \le  
 1 - 
\frac{4-3\rho}{2(1+2\rho)(2-\rho)}
$$
and
\begin{equation}\label{thm2_bound}
\mu \Bigl( \, \sum_{j \ge 1} \, \frac{x_j}{b^j} \, \Bigr) \ge 2 + 
\frac{4-3\rho}{\rho (9- 4\rho)}.
\end{equation}
\end{theorem}

We note that by setting $\mu = \mu \bigl( \, \sum_{j \ge 1} \, \frac{x_j}{b^j} \, \bigr)$
and solving 
\eqref{thm2_bound}, we obtain 
\eqref{bestminorp} and \eqref{bestminor}.

Let $\bfx$ denote the word given by the $b$-ary expansion 
of an irrational, real number $\xi$. 
In \cites{BuKi17,BuKi25}, we have shown that there are 
$\delta, \delta', \delta''> 0$ such that, 
if $r(n, \bfx) \le (1 + \delta) n$ for every sufficiently large integer $n$, then there 
are arbitrarily large $n$ such that $r(n, \bfx) \le (1 - \delta') n$
(see \cite{BuKi17}*{Theorem 3.5}, \cite{BuKi25}*{Proposition 1.6}), thus, by 
\cite{BuKi17}*{Lemma 3.6}, we have
$\mu(\xi) \ge 2 + \delta''$. Said differently, if $\mu(\xi) < 2 + \delta''$, then 
there are arbitrarily large $n$ such that 
$p(n, \bfx) \ge r(n, \bfx) \ge (1 + \delta) n$.  

In the present paper, our approach is different. 
Our assumption is on the function $n \mapsto p(n, \bfx)$ and not on the function $n \mapsto r(n, \bfx)$. We show that there are 
$\delta, \delta' > 0$ such that, 
if $p(n, \bfx) \le (1 + \delta) n$ for every sufficiently large integer $n$, then, there 
are arbitrarily large $n$ such that $r(n, \bfx) \le (1 - \delta') n$. 

To achieve this, we study very carefully 
the combinatorial structure of words of small complexity, 
focusing on the evolution of their Rauzy graphs defined in 
Subsection \ref{SubsecRauzyG}. 
Previous works on these words and their Rauzy graphs include \cites{Aber00,Aber03}, where Aberkane
studies, among other, the infinite words $\bfx$ such that $p(n, \bfx) / n$ tends to $1$ as 
$n$ tends to infinity. Also, Heinis \cite{Hein02} established that if the sequence 
$(p(n, \bfx) / n)_{n \ge 1}$ has a limit, then this limit cannot be in the open 
interval $(1, 2)$. This interval is the best one can obtain, since there exist 
sequences $\bfx$ with $p(n, \bfx) = 2n+1$ for $n \ge 1$. 
Certain such sequences, under the additional 
assumption of minimality, have been 
studied by Arnoux and Rauzy \cite{ArRa91}, again by means of the evolution of their Rauzy graphs. 
Recently, Creutz and Pavlov \cite{CrPa23} showed that if $X$ is an infinite transitive subshift with $\limsup p(n)/n < 4/3$, where $p(n)$ is the number of $n$-subwords appearing in sequences in the subshift $X$, then 
$X$ is uniquely ergodic and its unique ergodic measure has discrete spectrum.
See also \cite{CrPa}.

One important difficulty we had to overcome is that there is no additional 
assumption on $\bfx$ in Theorem \ref{thm2}. In particular, $\bfx$ is not supposed 
to be minimal, nor even recurrent: it may happen that arbitrarily long prefixes 
of $\bfx$ occur only once in $\bfx$.

\section{Auxiliary lemmas}

Throughout the paper, we assume that the words are written over a finite alphabet. 

\subsection{Special words and the Rauzy graph} \label{SubsecRauzyG}

Let $n$ be a positive integer. An $n$-word is a word of length $n$. If $\bfx$ is an infinite word, an $n$-subword 
(or, $n$-factor) of $\bfx$ is a factor of $\bfx$ of length $n$. 

\begin{definition}   \label{defspecial}
Let $\bfx$ be an infinite word and $n$ a positive integer.
We let  $\mathcal F_n(\mathbf x)$ denote the set of $n$-subwords of $\mathbf x$.
A word $w$ in $\mathcal F_n(\mathbf x)$ is called right-special if there exist 
distinct letters $x$, $y$ such that $wx$ and $wy$ are subwords of $\bfx$. 
A word $w$ in $\mathcal F_n(\mathbf x)$ is called left-special if there exist distinct letters $x$, $y$ such that $xw$ and $yw$ are subwords of $\bfx$. 
Any word in $\mathcal F_n(\mathbf x)$ which is right-special and left-special is called a bi-special word.
\end{definition}

The following lemma is an immediate consequence of Definition \ref{defspecial}. 

\begin{lemma}\label{special}
Let $\bfx$ be an infinite word and $m, n$ integers with $n > m \ge 1$.

(i) If $w$ is a left-special $n$-word of $\bfx$, then the prefix of $w$ of length $m$ is a left special $m$-word of $\bfx$. 

(ii) If $w$ is a right-special $n$-word of $\bfx$, then the suffix of $w$ of length $m$ is a right special $m$-word of $\bfx$. 
\end{lemma}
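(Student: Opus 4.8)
The plan is to reduce both statements to the elementary observation that any prefix and any suffix of a subword of $\bfx$ is itself a subword of $\bfx$; this is immediate from the definition of $\mathcal F_k(\bfx)$ as the set of length-$k$ blocks occurring in $\bfx$. Granting this, each part becomes a one-line verification, and parts (i) and (ii) are mirror images of one another.

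First I would prove (i). Let $w$ be a left-special $n$-word, so by Definition \ref{defspecial} there are distinct letters $x, y$ with $xw$ and $yw$ both subwords of $\bfx$. Write $u$ for the prefix of $w$ of length $m$. Since $m < n$, the word $xu$ is a prefix of $xw$, and a prefix of a subword is again a subword; hence $xu \in \mathcal F_{m+1}(\bfx)$. The same argument applied to $yw$ gives $yu \in \mathcal F_{m+1}(\bfx)$. As $x \ne y$, the letters $x, y$ witness that $u$ is left-special, which is exactly the claim.

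For (ii) I would argue symmetrically, replacing prefixes by suffixes. If $w$ is right-special with witnesses $x, y$ (so $wx$ and $wy$ are subwords) and $v$ is the suffix of $w$ of length $m$, then $vx$ and $vy$ are suffixes of the subwords $wx$ and $wy$ respectively, hence themselves subwords of $\bfx$; since $x \ne y$, the word $v$ is right-special. Alternatively one could derive (ii) from (i) by passing to the mirror image, but the direct argument is just as short.

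There is no genuine obstacle here: the only point to keep in mind is the closure of the set of subwords of $\bfx$ under taking factors, after which the definitions of left- and right-special give the result verbatim. If one prefers, it suffices to treat the single step $m = n-1$ and then iterate downward, but the direct argument above handles all $m < n$ at once.
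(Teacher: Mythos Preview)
Your argument is correct and is exactly what the paper has in mind: the paper gives no proof at all, merely noting that the lemma ``is an immediate consequence of Definition \ref{defspecial}.'' Your write-up is simply the spelled-out version of that remark.
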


\begin{lemma}\label{nonep}
Suppose that $\bfx$ is a non-ultimately periodic word.
Then there is at least one left-special word $w$ with the property that there are two distinct letters $x, y$ such that the words $xw$ and $yw$ occur infinitely many times in $\bfx$.
\end{lemma}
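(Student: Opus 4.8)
The plan is to argue by contraposition. Suppose that no left-special word $w$ admits two distinct letters $x,y$ with both $xw$ and $yw$ occurring infinitely often in $\bfx$; I will deduce that $\bfx$ is ultimately periodic, contradicting the hypothesis. For each $n \ge 1$ write $R_n$ for the set of length-$n$ factors of $\bfx$ that occur infinitely often. Since the alphabet is finite and $\bfx$ is infinite, each $R_n$ is a nonempty finite set, and deleting either the first or the last letter of a word of $R_{n+1}$ yields a word of $R_n$.

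First I would establish that the \emph{delete-first-letter} map $R_{n+1} \to R_n$ is a bijection. Any $v \in R_n$ occurs infinitely often, and all but at most one of these occurrences (the prefix one) is preceded by a letter, so by the pigeonhole principle some fixed letter $a$ precedes $v$ infinitely often; hence $av \in R_{n+1}$, giving surjectivity. The standing assumption says $v$ has at most one such left extension, giving injectivity. Thus $\Card R_{n+1} = \Card R_n$, so $\Card R_n$ equals a constant $k$ independent of $n$. Next I would observe that the \emph{delete-last-letter} map $R_{n+1} \to R_n$ is surjective (every $v \in R_n$ is followed infinitely often by some fixed letter, now with no exceptional occurrence since $\bfx$ is infinite to the right); being a surjection between finite sets of equal cardinality $k$, it is a bijection, hence injective. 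The upshot is that each $v \in R_n$ has a \emph{unique} right extension in $R_{n+1}$, which defines a shift map $S \colon R_n \to R_n$ sending $v$ to the length-$n$ suffix of its unique right extension.

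Finally I would fix $n$ and transfer this to $\bfx$ itself. Only finitely many length-$(n+1)$ words occur finitely often in $\bfx$, so there is a position $J$ beyond which every length-$(n+1)$ factor of $\bfx$ lies in $R_{n+1}$. For $j \ge J$ the word $x_{j+1} \cdots x_{j+n+1}$ is then the unique right extension of $f_j := x_{j+1} \cdots x_{j+n}$, which forces $f_{j+1} = S(f_j)$. Hence $(f_j)_{j \ge J}$ is an orbit of a self-map of the finite set $R_n$ and is therefore eventually periodic; reading off first letters shows that $\bfx$ is ultimately periodic, the desired contradiction. Consequently some left-special word $w$ admits two distinct letters $x,y$ with $xw$ and $yw$ occurring infinitely often.

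The main obstacle is conceptual rather than computational: because $\bfx$ is assumed neither recurrent nor minimal, one cannot argue with the set of all factors, and it is essential to restrict to the infinitely-occurring factors $R_n$ and to introduce the threshold $J$ in the last step, which converts scarcity of infinitely-occurring factors into genuine ultimate periodicity of $\bfx$. The most delicate move is the counting argument of the second paragraph, which turns a purely \emph{one-sided} (left-extension) hypothesis into \emph{right-extension} determinism — and it is precisely this determinism that drives the periodicity conclusion.
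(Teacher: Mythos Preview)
Your proof is correct and follows essentially the same route as the paper's: argue by contraposition, pass to the recurrent factors (equivalently, to a suitable tail of $\bfx$), observe that the failure of the conclusion kills left-special behaviour there, and deduce ultimate periodicity. The paper compresses all of this into a single sentence invoking the Morse--Hedlund theorem, whereas you unpack that step explicitly via the bijection/determinism argument on the sets $R_n$; the underlying idea is the same, your version is simply more detailed and self-contained.
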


\begin{proof}
If there is no such left-special word, then after deleting a suitable prefix of $\bfx$ the 
remaining word has no left-special words, which implies that $\bfx$ is ultimately periodic by \eqref{eq:MH}.
\end{proof}

Let $n$ be a positive integer. 
We define the Rauzy graph $\mathcal G_n = \mathcal G_n(\mathbf x)$ of $\bfx$. 
The set of vertices of the oriented graph 
$\mathcal G_n$ is the set of subwords of $\bfx$ of length $n$. 
There exists an edge from $u$ to $v$ in $\mathcal G_n$ if and only if there exist 
letters $x, y$ such that $ux = yv$ is a subword of $\bfx$. 
In this case, we write 
$$
[uv] = ux = yv. 
$$
A finite sequence of edges $[u_1u_2], [u_2u_3],\dots, [u_{k-1}u_k]$ is called a path and we write it as $[u_1u_2 \dots u_{k-1}u_k]$.
A subword of length $n+\ell$ of the word $\mathbf x$ induces a path of length $\ell$ in $\mathcal G_n$.
Moreover, a path of length $\ell$ in $\mathcal G_{n+k}$ induces a path of length $\ell + k$ in $\mathcal G_{n}$. 
However, the converse does not hold in general.

Let $U$ and $V$ be two paths in $\mathcal G_n$. 
If the last vertex of $U$ and the initial vertex of $V$ are identical, we define the concatenated path $UV$ in a natural way.
We denote the length of a path $U$ in $\cG_n$ by $|U|_n$. 
For example, we have $|U|_n=k$ and $|U|=n+k$, where $| \cdot |$ denotes the 
length (that is, the number of letters) of a word.

In the Rauzy graph, any non-right special vertex has exactly one outgoing edge, and any
non-left special vertex has exactly one incoming edge, unless it is a prefix of $\mathbf x$ having no other occurrence in $\bfx$. 
Any right special vertex has at least two outgoing edges.
Any left special vertex has at least two incoming edges.

\subsection{Recurrent words}

\begin{definition}
A subword $w$ of an infinite word $\mathbf x$ is \emph{recurrent} if $w$ occurs in $\mathbf x$ infinitely many times.
An infinite word $\mathbf x$ is \emph{$n$-recurrent} if every subword of $\mathbf x$ of length $n$ is recurrent. 
An infinite word $\bfx$ is called \emph{recurrent} if every subword of $\bfx$ is recurrent. 
\end{definition}

\begin{definition}
For any $n \ge 1$, let $a_n$ denote the shortest prefix of $\mathbf x$ such that $\mathbf x = a_n \mathbf z_n$ and $\mathbf z_n$ is $n$-recurrent.
We call $a_n$ the \emph{$n$-th non-recurrent prefix} of $\mathbf x$
and $\mathcal G'_n(\mathbf x) := \mathcal G_n(\mathbf z_{n+1})$ the \emph{$n$-th reduced Rauzy graph} of $\mathbf x$,
which is a subgraph of $\mathcal G_n(\mathbf x)$ consisting of recurrent vertices ($n$-subwords) and recurrent edges ($n+1$-subwords).
We note that the reduced Rauzy graph is strongly connected.
We let $s_n = |a_n|$ denote the length of $a_n$. 
When there is no ambiguity, we simply write $a$ and $\mathbf z$ instead of $a_n$ and $\mathbf z_n$. 
\end{definition}

\begin{lemma}\label{prefix1}
Suppose that $\mathbf z_n \ne \mathbf z_{n+1}$.  
Then there exists a bi-special $(n-1)$-word $w$ in $\mathbf z_n$ such that $x w y$ is an $(n+1)$-subword of $\mathbf z_{n}$ but $x w y$ is not a subword of $\mathbf z_{n+1}$, where $w y$ is the prefix of length $n$ of $\mathbf z_{n+1}$ and $x$ is the last letter of $a_{n+1}$. 
In particular, $wy$ 
is a left-special word.
\end{lemma}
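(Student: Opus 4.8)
The plan is to compare the $(n+1)$-recurrent tail $\mathbf{z}_{n+1}$ with the $n$-recurrent tail $\mathbf{z}_n$ and to exploit the minimality built into the definition of the non-recurrent prefix $a_{n+1}$. First I would observe that since $\mathbf{z}_{n+1}$ is $(n+1)$-recurrent it is \emph{a fortiori} $n$-recurrent, because each of its $n$-subwords extends to the right to a recurrent $(n+1)$-subword. By minimality of $a_n$ this forces $a_n$ to be a prefix of $a_{n+1}$, so that $\mathbf{z}_{n+1}$ is a suffix of $\mathbf{z}_n$. The hypothesis $\mathbf{z}_n \ne \mathbf{z}_{n+1}$ then lets me write $a_{n+1} = a_n\, t' x$, where $x$ is the last letter of $a_{n+1}$, whence $x \mathbf{z}_{n+1}$ is a suffix of $\mathbf{z}_n$ and $xwy$ is exactly its prefix of length $n+1$. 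In particular $xwy$ is an $(n+1)$-subword of $\mathbf{z}_n$, which is one of the required assertions.

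The crucial structural input is the minimality of $a_{n+1}$: deleting its last letter yields a strictly shorter prefix (legitimate since $a_{n+1}$ is nonempty), so the corresponding tail $x\mathbf{z}_{n+1}$ is \emph{not} $(n+1)$-recurrent. I would then note that every $(n+1)$-subword of $x\mathbf{z}_{n+1}$ apart from its prefix $xwy$ already occurs inside $\mathbf{z}_{n+1}$ and is therefore recurrent there; hence the failure of $(n+1)$-recurrence must be caused by $xwy$ itself, i.e. $xwy$ occurs only finitely often in $x\mathbf{z}_{n+1}$. Were $xwy$ a subword of $\mathbf{z}_{n+1}$, its $(n+1)$-recurrence would force infinitely many occurrences, a contradiction; this proves that $xwy$ is not a subword of $\mathbf{z}_{n+1}$.

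From here the special-word assertions follow. For left-specialness of $wy$: since $wy$ is a prefix of the $n$-recurrent word $\mathbf{z}_{n+1}$, it recurs, so it has an occurrence at a position $\ge 1$, preceded by some letter $x'$; because $xwy$ never occurs in $\mathbf{z}_{n+1}$ we must have $x' \ne x$, and together with the occurrence of $xwy$ in $\mathbf{z}_n$ this exhibits two distinct left-extensions of $wy$. Lemma \ref{special}(i) then gives that the length-$(n-1)$ prefix $w$ is left-special. For right-specialness of $w$ I would argue through $xw$: it is an $n$-subword of the $n$-recurrent word $\mathbf{z}_n$, hence recurs infinitely often, whereas $xwy$ occurs only finitely often in $\mathbf{z}_n$ (occurrences lying inside the suffix $\mathbf{z}_{n+1}$ are excluded, leaving finitely many near the boundary). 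Therefore infinitely many occurrences of $xw$ are followed by a letter $y'' \ne y$, so $xw$ is right-special and, by Lemma \ref{special}(ii), so is its suffix $w$; combining, $w$ is bi-special.

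The main obstacle, I expect, is the bookkeeping that isolates $xwy$ as the \emph{unique} obstruction: one must argue simultaneously that it is the only new $(n+1)$-subword created at the boundary, that it fails to recur, and that this failure upgrades to a genuine right-special branching of $w$ via the $n$-recurrence of $\mathbf{z}_n$. Extra care is needed because $\mathbf{x}$ is assumed neither recurrent nor minimal, so every recurrence statement must be made relative to the appropriate tail $\mathbf{z}_n$ or $\mathbf{z}_{n+1}$ rather than to $\mathbf{x}$ itself.
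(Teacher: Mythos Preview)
Your proof is correct and follows essentially the same route as the paper's: both arguments identify the boundary word $xwy$ (the $(n+1)$-subword straddling the last letter of $a_{n+1}$) as the unique non-recurrent $(n+1)$-subword created by shortening $a_{n+1}$, and then deduce that $xw$ is right-special and $wy$ is left-special from the recurrence of $xw$ and $wy$ against the non-recurrence of $xwy$. The paper phrases this in Rauzy-graph notation ($u_s=xw$, $u_{s+1}=wy$, $[u_su_{s+1}]$ non-recurrent) and is terser, while you spell out explicitly why $xwy\notin\mathcal F_{n+1}(\mathbf z_{n+1})$ and why its occurrences in $\mathbf z_n$ are finite; but the underlying mechanism is identical.
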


\begin{proof}
We write $\bfx = x_1 x_2 \dots$. 
Let $\bfx = [u_1 u_2 \dots ]$ with $u_i = x_i x_{i+1} \dots x_{i+n-1}$. 
Let $s := s_{n+1} = |a_{n+1}|$. 
Then $\mathbf z_{n+1} = x_{s+1} x_{s+2} \dots = [u_{s+1} u_{s+2}  \dots ]$ and $[u_s u_{s+1}]$ is not recurrent.
Since both $u_s$ and $u_{s+1}$ are both recurrent, $u_s$ is right special and $u_{s+1}$ is left special.
Let $u_s =xw$ and $u_{s+1} = wy$. 
Then $w$ is a bi-special $(n-1)$-word by Lemma~\ref{special}. 
Therefore, there are letters $x'$ and $y'$ such that $x' \ne x$, $y' \ne y$ and the $n$-words $x'w$ and $wy'$ are factors of $\mathbf z_{n}$.
However $xwy = [u_s u_{s+1}] = x_s \dots x_{s+n}$ is not recurrent, thus it is not a subword of $\mathbf z_{n+1}$.
\end{proof}

\begin{lemma}\label{prefix2}
Let $n \ge 1$. Then we have 
\begin{equation*}
p(n + s_n -1,\bfx) = p(n + s_n -1,\bfz_{n})+ s_n.
\end{equation*}
\end{lemma}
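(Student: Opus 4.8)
The plan is to compare the set of $(n+s_n-1)$-factors of $\bfx$ with those of its $n$-recurrent tail $\bfz_n$ and to show that deleting the prefix $a_n$ removes exactly $s_n$ factors, each counted once. Write $s=s_n$, $m=n+s-1$, and $u_i=x_i\cdots x_{i+n-1}$, so that $\bfx=a_n\bfz_n$ with $\bfz_n=x_{s+1}x_{s+2}\cdots$. Every $m$-factor of $\bfx$ is of the form $V_j:=x_j\cdots x_{j+m-1}$, and those with $j\ge s+1$ are precisely the $m$-factors of $\bfz_n$; hence it suffices to prove that the $s$ words $V_1,\dots,V_s$ are pairwise distinct and that none of them occurs inside $\bfz_n$.

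The key step is to locate a single ``anchor'' factor that pins down all these occurrences. First I would record, from the minimality of $a_n$, that $u_s$ is non-recurrent: the tail $x_s x_{s+1}\cdots$ obtained by shortening $a_n$ by one letter fails to be $n$-recurrent, and since every $u_i$ with $i>s$ is recurrent (as $\bfz_n$ is $n$-recurrent), the only possible non-recurrent $n$-factor there is $u_s$ itself. Combining these two facts shows that $u_s$ cannot occur at any position $q'>s$: such an occurrence would mean $u_{q'}=u_s$ with $u_{q'}$ recurrent, forcing $u_s$ to be recurrent, a contradiction. Hence every occurrence of $u_s$ in $\bfx$ lies at a position $\le s$.

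With this anchor in hand, the counting follows. For $1\le j\le s$ the factor $u_s$ sits inside $V_j$ at offset $s-j$, and the choice $m=n+s-1$ is exactly what guarantees this for all such $j$, the binding case being $j=1$. Consequently, any occurrence of $V_j$ at a position $q$ produces an occurrence of $u_s$ at position $q+s-j$, which must be $\le s$, so $q\le j$. Thus all occurrences of $V_j$ lie at positions $\le j\le s$: in particular $V_j$ never reappears inside $\bfz_n$, and if $V_j=V_k$ with $j<k\le s$ then $V_j$ would occur at $k>j$, a contradiction, so $V_1,\dots,V_s$ are pairwise distinct. Therefore the $m$-factors of $\bfx$ split as the $p(m,\bfz_n)$ factors of $\bfz_n$ together with these $s$ new ones, giving $p(n+s_n-1,\bfx)=p(n+s_n-1,\bfz_n)+s_n$; the case $s=0$ is trivial. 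The part requiring the most care is the anchoring argument, namely isolating $u_s$ as the unique rightmost non-recurrent $n$-factor and verifying that $m=n+s-1$ is precisely the threshold at which this anchor is forced into every prefix-starting $m$-factor.
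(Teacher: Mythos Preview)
Your proof is correct and follows essentially the same approach as the paper's. Both arguments pivot on the observation that the $n$-word $u_s$ is non-recurrent and hence cannot appear at any position greater than $s$; from this anchor, both conclude that each of the $s$ words $V_1,\dots,V_s$ contains $u_s$, is therefore not a factor of $\bfz_n$, and that these $s$ words are pairwise distinct. Your write-up spells out the anchoring step (why $u_s$ is the unique rightmost non-recurrent $n$-factor and why any occurrence of $V_j$ forces an occurrence of $u_s$ at position $q+s-j$) more explicitly than the paper does, but the underlying idea is identical.
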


\begin{proof}
If $s_n = 0$, then $\bfx = \bfz_0$, thus the lemma holds. 
We assume that $s := s_n = |a_n| \ge 1$. 
We write $\bfx = x_1 x_2 \dots = [u_1 u_2 \dots ]$ 
with $u_i = x_i x_{i+1} \dots x_{i+n-1}$. 
Then $u_s$ is not recurrent and $u_{i}$ is recurrent for $i \ge s+1$.

For each $i$ with $1 \le i \le s$, the subword
$[u_i \dots u_{i+s-1}]$  contains $u_{s}$.
However, for $i \ge s+1$, the subword $[u_i \dots u_{i+s-1}]$ is a factor of $\bfz_n = [u_{s+1} u_{s+2} \dots ]$ and do not contain any non-recurrent $n$-subword.

Since $u_j \ne u_s$ for $j \ge s+1$,
we deduce that the $s$ words $[u_i \dots u_{i+s-1}]$ 
with $i=1, \ldots , s$ are all distinct.
Therefore, we have
$p(n + s -1,\bfx) = p(n + s -1,\bfz_{n})+ s.$
\end{proof}

The following lemma is \cite{CaNi10}*{Theorem 4.5.4}.

\begin{lemma}\label{xp}
Let $\bfx = x_1 x_2 \dots $ be a non-ultimately periodic word. 
Let $n \ge 1$. 
Suppose that $p(n+1,\mathbf x) = p(n,\mathbf x)+1$. 
Then there is one right-special word of length $n$. 
Moreover, if $x_1 \dots x_{n}$ occurs in $\bfx$ more than once, then there 
is a unique left-special word of length $n$ in $\bfx$ and $\bfx$ is $n$-recurrent. 
\end{lemma}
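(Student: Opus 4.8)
The plan is to read everything off the Rauzy graph $\cG_n = \cG_n(\bfx)$ and the one-sided walk $u_1 u_2 \dots$ that $\bfx$ traces in it, where $u_i = x_i \cdots x_{i+n-1}$. For the first assertion I would count the $(n+1)$-factors by their length-$n$ prefixes. Since $\bfx$ is infinite to the right, every vertex of $\cG_n$ has out-degree at least $1$, and the out-degree exceeds $1$ exactly when the vertex is right-special. Writing $d^+(w)$ for the out-degree of $w \in \mathcal F_n(\bfx)$, each $(n+1)$-factor is determined by its length-$n$ prefix, so
$$
p(n+1,\bfx) = \sum_{w \in \mathcal F_n(\bfx)} d^+(w) = p(n,\bfx) + \sum_{w} \bigl( d^+(w) - 1 \bigr).
$$
The hypothesis $p(n+1,\bfx)=p(n,\bfx)+1$ forces the last sum to equal $1$; as every right-special word contributes at least $1$, there is exactly one right-special word $r$, and it has exactly two right extensions.

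For the \emph{moreover} part I would run the dual count with in-degrees $d^-(w)$, each $(n+1)$-factor now being recorded against its length-$n$ suffix. The only one-sided subtlety, already noted after the definition of the Rauzy graph, is that a length-$n$ factor fails to have a left extension precisely when it is the prefix $x_1\cdots x_n$ occurring only once. The assumption that $x_1\cdots x_n$ occurs more than once removes exactly this exception, so every vertex has in-degree at least $1$, and the same counting (now for $\sum_w (d^-(w)-1) = 1$) yields a unique left-special word $\ell$, of in-degree two.

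The heart of the matter, and the step I expect to be hardest, is $n$-recurrence, because here the left/right asymmetry of a one-sided word really bites and the prefix may itself be non-recurrent. Let $R \subseteq \mathcal F_n(\bfx)$ be the set of recurrent (infinitely visited) vertices and $T$ the remaining transient ones; the goal is $T=\emptyset$. First I would invoke non-ultimate-periodicity: every vertex $w\ne r$ has a unique successor, so if $r$ were visited only finitely often, or if only one of its two out-edges were used infinitely often, the walk would eventually follow a deterministic successor rule and hence be eventually periodic. Thus $r\in R$ and both its successors lie in $R$; since each $w\ne r$ in $R$ has its unique successor in $R$ as well, there is \emph{no} edge from $R$ to $T$, and once the walk enters $R$ it never leaves. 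Consequently the transient positions form an initial segment $\{1,\dots,N\}$: either $u_1\in R$, whence $T$ is never visited and $T=\emptyset$, or $u_1\in T$ with $u_1,\dots,u_N\in T$ and $u_i\in R$ for $i>N$.

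It remains to rule out the latter case using the prefix hypothesis, which is where the assumption earns its keep. Every occurrence of the prefix $P=u_1$ then lies in $\{1,\dots,N\}$, so a second occurrence gives $u_{i_2}=u_1$ with $2\le i_2\le N$. Because $r\in R$ is disjoint from $T$, none of $u_1,\dots,u_N$ equals $r$, so all transitions along the transient segment are forced by the successor map. Then $u_{i_2}=u_1$ makes $u_1\to u_2\to\cdots\to u_{i_2}=u_1$ a cycle avoiding $r$, and forward determinism traps the walk on this cycle forever, so it can never reach $u_{N+1}\in R$ — equivalently, $\bfx$ would be ultimately periodic, a contradiction. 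Hence $T=\emptyset$ and $\bfx$ is $n$-recurrent. The points demanding care throughout are to keep both counting arguments genuinely one-sided and to deploy non-ultimate-periodicity exactly at the forward determinism off $r$; everything else is bookkeeping on the walk.
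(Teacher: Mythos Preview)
Your argument is correct. The paper does not actually prove this lemma; it simply cites it as \cite{CaNi10}*{Theorem 4.5.4}. Your proof is therefore a genuine addition rather than a reconstruction.

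The degree-counting for the unique right-special (and, under the prefix hypothesis, unique left-special) word is the standard one and is exactly how Cassaigne--Nicolas argue. Your treatment of $n$-recurrence is also sound: the key steps --- that the unique right-special vertex $r$ must be visited infinitely often with \emph{both} out-edges traversed infinitely often (else the tail of the walk becomes deterministic on a finite graph, forcing ultimate periodicity), that consequently no edge runs from the recurrent set $R$ into the transient set $T$, and that a second occurrence of the prefix within the transient initial segment would trap the walk in a deterministic cycle avoiding $r$ --- are all correct and cleanly separated. One tiny remark: in your final step you only need that $u_1,\dots,u_{i_2-1}$ avoid $r$, which follows immediately from $i_2\le N$ and $r\in R$; the conclusion is then that $\bfx$ is in fact periodic (not merely ultimately periodic), which is of course still a contradiction.
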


\begin{definition}
A right-special word $w$ is called \emph{essential} 
if $w$ is a suffix of a right-special word of any length 
(or of arbitrarily large length, this is equivalent).
\end{definition}

From Lemma~\ref{special}, we note that for any non-essential right-special word $w$ there exists $m$ such that $w$ is not a suffix of a right special word of length bigger than $m$.
Therefore, if $\bfx$ is not ultimately periodic, then for each $n$ there exists an essential right-special word of length $n$.

\begin{lemma}\label{especial}
Suppose that $\liminf_{n\to\infty}p(n,\bfx)/n<2$ and $\bfx$ is not ultimately periodic.
Then, for each $n$, there exists a unique essential right-special word of length~$n$.
\end{lemma}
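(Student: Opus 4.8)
The existence of at least one essential right-special word of length $n$ has already been recorded just before the statement, so the plan is to prove the uniqueness part, i.e.\ that for each $n$ there is \emph{at most} one essential right-special word of length $n$. I would argue by contradiction, assuming that for some $n_0$ there are two distinct essential right-special words of length $n_0$, and deducing that the complexity eventually grows with slope at least $2$, contradicting $\liminf_{n} p(n,\bfx)/n < 2$. The first ingredient I would record is the standard relation between complexity increments and right-special words: since $\bfx$ is infinite, every $w \in \mathcal F_n(\bfx)$ admits at least one right extension, so
$$ p(n+1,\bfx) - p(n,\bfx) = \sum_{w \in \mathcal F_n(\bfx)} \big( \deg^+(w) - 1 \big) \ge \#\{ w \in \mathcal F_n(\bfx) : w \text{ is right-special}\}, $$
where $\deg^+(w)$ is the number of distinct letters $a$ with $wa \in \mathcal F_{n+1}(\bfx)$. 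In particular the number of right-special $n$-words is at most $p(n+1,\bfx)-p(n,\bfx)$.

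The key step is a monotonicity statement for the number $e_n$ of essential right-special words of length $n$, namely $e_n \le e_{n+1}$. Given an essential right-special word $w$ of length $n$, by definition $w$ is a suffix of right-special words of arbitrarily large length. By Lemma~\ref{special}(ii), the suffix of length $n+1$ of each such longer right-special word is itself right-special and has $w$ as its suffix of length $n$. Since the alphabet is finite, there are only finitely many candidate length-$(n+1)$ suffixes, so the pigeonhole principle yields one of them, say $w'$, that occurs as the length-$(n+1)$ suffix of right-special words of arbitrarily large length; thus $w'$ is essential of length $n+1$ and has $w$ as suffix of length $n$. Distinct $w$ produce distinct $w'$, because $w$ is recovered as the length-$n$ suffix of $w'$, which gives $e_n \le e_{n+1}$.

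Combining the two ingredients finishes the argument: if $e_{n_0} \ge 2$ for some $n_0$, then $e_n \ge 2$ for all $n \ge n_0$ by monotonicity, and since essential right-special words are in particular right-special, the number of right-special $n$-words is at least $2$ for all $n \ge n_0$. The displayed relation then gives $p(n+1,\bfx) - p(n,\bfx) \ge 2$ for all $n \ge n_0$, whence $p(n,\bfx) \ge 2n + (p(n_0,\bfx) - 2n_0)$ and $\liminf_{n} p(n,\bfx)/n \ge 2$, contradicting the hypothesis. Therefore $e_n \le 1$ for every $n$, and together with the already-established existence this yields $e_n = 1$, as claimed.

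I expect the main obstacle to be the careful justification of the monotonicity step: one must check that being \emph{essential} (a suffix of right-special words of arbitrarily large length) is correctly propagated by the pigeonhole argument, and that Lemma~\ref{special}(ii) applies to exactly the suffixes in play. The complexity bookkeeping in the first paragraph (in particular the fact that every factor of an infinite word is right-extendable, so that no boundary term is lost) and the final linear lower bound are then routine.
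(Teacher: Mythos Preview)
Your argument is correct. The monotonicity step $e_n \le e_{n+1}$ is properly justified by the pigeonhole argument you describe, and the complexity bookkeeping is standard.

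The paper's proof reaches the same conclusion more directly, without the monotonicity of $(e_n)$ or the asymptotic contradiction. From $\liminf_n p(n,\bfx)/n<2$ one picks a single $m>n$ with $p(m+1,\bfx)=p(m,\bfx)+1$; Lemma~\ref{xp} then gives a \emph{unique} right-special word of length $m$. Since every essential right-special word of length $n$ is, by definition, a suffix of some right-special word of length $m$, they are all equal to the length-$n$ suffix of that unique word. Your route proves a bit more (the sequence $(e_n)$ is nondecreasing) at the cost of a longer argument; the paper's route trades that extra information for a two-line proof.
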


\begin{proof}
By the above remark, there exists an essential right-special word of length $n$.
By assumption, there is an integer $m > n$ such that $p(m+1,\bfx) = p(m,\bfx) +1$. 
By Lemma~\ref{xp}, there exists one right-special word of length $m$. 
Therefore, the number of essential right-special words of length $n$ is at most one.
\end{proof}

\begin{lemma}\label{nn+1}
Suppose that $\bfx$ is $n$-recurrent and non-ultimately periodic.
If $p(n+1,\mathbf x) = p(n,\mathbf x) + 1$, then $\bfx$ is $(n+1)$-recurrent.  
\end{lemma}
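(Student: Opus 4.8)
The plan is to argue by contradiction: assuming $\bfx$ is $n$-recurrent but \emph{not} $(n+1)$-recurrent, I will produce a forbidden periodicity. Since $\bfx$ is $n$-recurrent we have $s_n = 0$ and $\bfz_n = \bfx$, so every $n$-subword is a recurrent vertex of $\cG_n$, and the one-sided infinite walk $u_1 u_2 \cdots$ induced by $\bfx$ (with $u_i = x_i \cdots x_{i+n-1}$) visits every vertex infinitely often. Because $p(n+1,\bfx) = p(n,\bfx)+1$ and $x_1\cdots x_n$ is recurrent, Lemma~\ref{xp} gives a unique right-special $n$-word $u_0$. Counting, using that $\cG_n$ has $p(n,\bfx)$ vertices and $p(n+1,\bfx)$ edges and that $\sum_u (d^+(u)-1) = p(n+1,\bfx) - p(n,\bfx) = 1$, where every recurrent vertex has $d^+(u) \ge 1$, forces $d^+(u_0) = 2$ and $d^+(u) = 1$ for every other vertex $u$.

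Next I would pin down the non-recurrent edges. If a recurrent vertex $u$ has out-degree $1$, then each time the walk visits $u$ (infinitely often) it must leave along the unique out-edge, so that edge is traversed infinitely often and is therefore recurrent; likewise, by the pigeonhole principle at least one of the two out-edges of $u_0$ is recurrent. Hence every $(n+1)$-word is recurrent except possibly the two out-edges of $u_0$, of which at most one can fail to be recurrent. Under the contradiction hypothesis some $(n+1)$-subword is non-recurrent, so there is \emph{exactly one} non-recurrent edge $e_0$, it is an out-edge of $u_0$, and the other out-edge of $u_0$ is recurrent.

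Finally I pass to the reduced graph $\cG'_n = \cG_n(\bfz_{n+1})$, whose vertices are the recurrent $n$-words (all of them, by $n$-recurrence) and whose edges are the recurrent $(n+1)$-words, that is, exactly the edges of $\cG_n$ other than $e_0$. Thus in $\cG'_n$ the vertex $u_0$ has out-degree $1$ and every other vertex also has out-degree $1$, so \emph{every} vertex of $\cG'_n$ has out-degree exactly $1$. The walk induced by $\bfz_{n+1}$ runs inside $\cG'_n$ and at each step has a unique continuation; being an infinite deterministic walk on a finite graph it is eventually periodic, so $\bfz_{n+1}$, and hence $\bfx = a_{n+1}\bfz_{n+1}$, is ultimately periodic, contradicting the hypothesis. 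This proves $(n+1)$-recurrence. The one delicate point, on which the whole argument hinges, is the degree bookkeeping: it is precisely the equality $p(n+1,\bfx)=p(n,\bfx)+1$ that forces $u_0$ to have out-degree exactly $2$, so that deleting the single non-recurrent edge removes all branching from $\cG'_n$; without this exact count one could only bound, not eliminate, the remaining choices.
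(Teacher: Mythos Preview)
Your proof is correct, but it follows a different line from the paper's. The paper argues via \emph{left}-special words: assuming $a_{n+1}$ is nonempty, Lemma~\ref{prefix1} forces the $n$-prefix of $\bfz_{n+1}$ to be left-special; by Lemma~\ref{xp} this is the \emph{unique} left-special $n$-word and it has exactly two left-extensions; then Lemma~\ref{nonep} guarantees that both of these extensions occur infinitely often, directly contradicting the assumed non-recurrence of the edge $x_s x_{s+1}\cdots x_{s+n}$.

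Your argument is the out-degree dual and is more self-contained: you never invoke Lemmas~\ref{prefix1} or~\ref{nonep}, replacing them by the explicit degree count $\sum_u(d^+(u)-1)=1$ and the observation that removing the single non-recurrent out-edge of $u_0$ leaves $\cG'_n$ with uniform out-degree $1$, so that the induced walk---and hence $\bfz_{n+1}$---is ultimately periodic. What this buys you is independence from the auxiliary lemmas (your proof would stand alone), and a transparent structural explanation of \emph{why} the complexity increment $+1$ is the exact threshold. What the paper's route buys is brevity once those lemmas are in hand, and it pinpoints the non-recurrent edge as a left-extension of the unique left-special word rather than deducing periodicity globally. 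Both contradictions are sound; yours trades a pointwise contradiction for a global one.
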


\begin{proof}
Write $\bfx = a \mathbf z$, where $a$ is the $(n+1)$-th non-recurrent prefix of $\bfx$ and assume that $a$ is non-empty.
Let $\mathbf z = x_{s+1} x_{s+2} \dots$. 
Then $x_{s+1} \dots x_{s+n}$ is a left-special $n$-word of $\bfx$ by Lemma~\ref{prefix1}. 
Since there is only one left special $n$-word in $\bfx$ by Lemma~\ref{xp} and the only left special word has two preceding letters,
Lemma~\ref{nonep} implies that $x_{s} x_{s+1} \dots x_{s+n}$ occurs infinitely many times in $\bfx$, which contradicts the choice of $a$ and $\mathbf z$.
\end{proof}

\subsection{Rauzy graph of $\infty$-shape}

A Rauzy graph $\mathcal G_n$ (resp., a reduced Rauzy graph $\mathcal G'_n$) is called of \emph{$\infty$-shape} 
if there exist exactly one left special word and one right special word of length $n$ in $\mathbf{x}$ (resp., in $\mathbf z_{n+1}$), both of which coincide with the word $w$,
and there are exactly two paths from $w$ to itself.
Note that $w$ is a bi-special word.

\begin{lemma}\label{pn+1}
Suppose that a non-ultimately periodic word $\mathbf x$ satisfies that $p(n+1,\mathbf x) = p(n,\mathbf x) +1$ for some positive integer $n$.
Then the reduced Rauzy graph $\mathcal G'_n(\mathbf x)$ is isomorphic to one of the graphs
\begin{center}
\includegraphics{Rauzy_graph-4.mps}
\qquad
\includegraphics{Rauzy_graph-5.mps}
\end{center}
Moreover, in the latter case,  
there exists a positive integer $d$ depending on $n$ such that $\mathcal G'_{n+d}(\mathbf x)$ is of $\infty$-shape.
\end{lemma}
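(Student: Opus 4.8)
The strategy is to work inside the reduced Rauzy graph $\mathcal G'_n = \mathcal G_n(\mathbf z_{n+1})$, every vertex and edge of which is recurrent, and then to track how it changes as the length parameter grows. Note first that since $\mathbf z_{n+1}$ is $(n+1)$-recurrent, each of its $n$-subwords recurs, so in $\mathcal G'_n$ every vertex has in-degree and out-degree at least $1$; moreover $\mathcal G'_n$ is strongly connected, since any two recurrent vertices are joined by the factor of $\mathbf z_{n+1}$ running from an occurrence of the first to a later occurrence of the second. Also, $\mathbf z_{n+1}$ is again not ultimately periodic.

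For the classification, recall that $p(n+1,\bfx)=p(n,\bfx)+1$ forces, by Lemma \ref{xp}, a unique right-special $n$-word of $\bfx$, necessarily of out-degree $2$. As the subwords of $\mathbf z_{n+1}$ form a subset of those of $\bfx$, the graph $\mathcal G'_n$ has at most one right-special vertex, of out-degree at most $2$; and it has at least one, for otherwise $p(n+1,\mathbf z_{n+1})=p(n,\mathbf z_{n+1})$, against \eqref{eq:MH}. Thus there is exactly one right-special vertex $R$, of out-degree $2$, and counting edges gives one more edge than vertices; as every in-degree is at least $1$, exactly one vertex $L$ has in-degree $2$ and all others in-degree $1$. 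Contracting each maximal chain of vertices of in- and out-degree $1$ then collapses $\mathcal G'_n$ to a strongly connected multigraph on $\{R,L\}$: if $R=L$ this must be two loops at the common bi-special vertex, giving the figure-eight, that is, the $\infty$-shape; if $R\ne L$ a short degree count (using strong connectivity) forces two internally disjoint paths from $R$ to $L$ together with a single path $Q$ from $L$ to $R$. These are precisely the two graphs of the statement.

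Suppose now we are in the latter case, write $Q=[v_0v_1\cdots v_q]$ with $v_0=L$, $v_q=R$, and set $d=|Q|_n=q\ge 1$. The key point is that raising the length by one lengthens the right-special word on the left and the left-special word on the right, both sliding along $Q$. Concretely, I would prove by induction on $k=0,\dots,q$ that $\mathbf z_{n+1}$ is $(n+k+1)$-recurrent, that $\mathcal G_{n+k}(\mathbf z_{n+1})$ has unique right-special vertex $[v_{q-k}\cdots v_q]$ and unique left-special vertex $[v_0\cdots v_k]$, and that this graph equals the reduced graph $\mathcal G'_{n+k}(\bfx)$ (which holds because $(n+k+1)$-recurrence makes all $(n+k)$- and $(n+k+1)$-subwords of $\mathbf z_{n+1}$ recurrent, hence shared with its reduced tail). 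In the inductive step, any right-special $(n+k+1)$-word has $[v_{q-k}\cdots v_q]$ as its length-$(n+k)$ suffix by Lemma \ref{special}(ii); as this vertex has in-degree $1$ when $k<q$, the word $[v_{q-k-1}\cdots v_q]$ is the only candidate and is right-special of out-degree $2$, so $p(n+k+2,\mathbf z_{n+1})=p(n+k+1,\mathbf z_{n+1})+1$ and Lemma \ref{nn+1} promotes $(n+k+1)$-recurrence to $(n+k+2)$-recurrence. The dual argument via Lemma \ref{special}(i) yields the left-special word $[v_0\cdots v_{k+1}]$. When $k$ reaches $q$ the two words become the single bi-special word $[v_0\cdots v_q]$; the classification of the first part then applies at length $n+q$ and, the right- and left-special vertices now coinciding, forces $\mathcal G'_{n+q}(\bfx)$ to be the figure-eight, that is, of $\infty$-shape.

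The delicate point is exactly the control of the reduced graph as the length increases: a priori the non-recurrent prefix $a_m$ could grow with $m$, so that $\mathcal G'_m$ is read off a strictly shorter tail and the clean migration of the special words breaks down. What saves the argument is Lemma \ref{nn+1}, which preserves recurrence one length at a time, provided the complexity keeps increasing by exactly $1$; the theta-shape guarantees this, because the right-special vertex has a unique backward continuation all the way back to $L$. The bookkeeping to be done with care is therefore to verify, at each intermediate length, that the out-degree stays equal to $2$ and that no second special vertex appears, so that the edge-counting of the first part remains available and the induction closes.
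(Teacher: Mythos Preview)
Your proposal is correct and follows essentially the same approach as the paper: classify the strongly connected reduced graph by its unique right- and left-special vertices, and in the theta case slide along the $L$-to-$R$ path using Lemma~\ref{nn+1} to preserve recurrence until the two special words merge into a bi-special one. The only cosmetic differences are that the paper works with $\bfz_n$ (implicitly equal to $\bfz_{n+1}$ here) and rules out the bad two-vertex shape by ultimate periodicity rather than by invoking strong connectivity, and it states the migration of the right-special word more tersely rather than as an explicit induction.
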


\begin{proof}
Let $\mathbf x = a \mathbf z$ with the $n$-th non-recurrent prefix (possibly empty) $a$.
Then, $p(n+1,\mathbf z) = p(n,\mathbf z)+1$ and $\mathbf z$ has either one right special word and one left special word, or one bispecial word of length $n$. 

If there is a bi-special word $w$ of length $n$ in $\mathbf z$, then there are no other special vertices. 
Let $[w u_1 \cdots u_{k-1} w]$ and $[w v_1 \cdots v_{\ell-1} w]$ denote the 
two cycles containing $w$. 
When $k=1$, then $[w u_1 \cdots u_{k-1} w]$ denotes $[ww]$.
The Rauzy graph $\mathcal G_n(\mathbf z)$ can be represented as follows:
\begin{center}
\includegraphics{Rauzy_graph-9.mps}
\end{center}

If the right-special word and the left special word are distinct, then $\mathcal G_n(\mathbf z)$ is either of the form 
\begin{center}
\includegraphics{Rauzy_graph-3.mps}
\quad \text{ or } \quad
\includegraphics{Rauzy_graph-6.mps}
\end{center}
For the left-hand side graph, $\mathbf z$ is ultimately periodic, which is not allowed.
Let $u$ denote the left special word and $w$ denote the right special word.
In the Rauzy graph $\mathcal G_n(\mathbf z)$, there exist two paths from $w$ to $u$, namely, 
$[w u_1 \cdots u_{k-1} u]$ and $[w v_1 \cdots v_{\ell-1} u]$ and there exists one path from $u$ to $w$, namely $[u w_1 \dots w_{m-1} w]$.
Then, the $(n+m)$-subword $[u w_1 \dots w_{m-1} w]$ is bispecial since $[u w_1 \dots w_{m-1} w u_1]$, $[u w_1 \dots w_{m-1} w v_1]$ 
and  $[u_{k-1} u w_1 \dots w_{m-1} w]$, $[v_{\ell-1} u w_1 \dots w_{m-1} w]$ are edges of the graph $\mathcal G_{n+m}(\mathbf z)$.
The two cycles containing the bispecial word are  
$$
[u w_1 \dots w_{m-1} w u_1 u_2 \cdots u_{k-1} u w_1 \dots w_{m-1} w]
$$
and 
$$
[u w_1 \dots w_{m-1} w v_1 v_2 \cdots v_{\ell-1} u w_1 \dots w_{m-1} w].
$$
Consequently, the graph $\mathcal G_{n+m}(\mathbf z)$ is of $\infty$-shape. 

Any right special word of $\bfz$ of length $n+d$ for $0 \le d \le m$ is a suffix of 
$[u w_1 w_2 \dots w_{m-1} w]$.
Therefore, $p(n+d+1,\mathbf z) = p(n+d,\mathbf z)+1$, for $0 \le d \le m$.
By Lemma~\ref{nn+1}, the word $\mathbf z$ is $(n+m+1)$-recurrent.
Hence, we have $\mathcal G'_{n+m}(\bfx) = \mathcal G_{n+m}(\mathbf z)$.
\end{proof}

\section{Evolution of the Rauzy graphs}

In this section, we consider the evolution of the Rauzy graph of $\mathbf x = x_1 x_2 \ldots$ when $\bfx$ is not ultimately periodic and 
$p(n,\mathbf x) \le \frac 43 n$ for every large integer $n$.
Then Lemma~\ref{pn+1} implies that there exist infinitely many $n$ such that the reduced Rauzy graphs $\cG'_n(\bfx)$ are of $\infty$-shape. 
Let the Rauzy graph $\cG_n(\mathbf x)$ be of $\infty$-shape with the bispecial word $w$, and let
$U = [w u_1 \dots u_{k-1} w]$, $V = [w v_1 \dots v_{\ell-1} w]$ denote the two cycles of $\cG_n(\bfx)$ from $w$ to itself 
and put $u_0 = u_k = v_0  = v_\ell = w$.
See Figure~\ref{fig1}.
Note that if $k=1$, then $U=[ww]$.  
We write concatenated paths in the Rauzy graph $\cG_n$ as 
$UU = [w u_1 \dots u_{k-1}w u_1 \dots u_{k-1}w]$, 
$UV= [w u_1 \dots u_{k-1}w v_1 \dots v_{\ell-1}w]$
and $V^b = V \cdots V$ 
is called the $b$-th concatenated path of $V$.
We also define 
$$
U_i = [u_i \dots u_{k-1} w], \quad i = 1, \ldots, k-1,
$$ 
and 
$$
V_i = [v_i \dots v_{\ell-1} w], 
\quad i = 1, \ldots, \ell-1. 
$$
We write $U_0 = U$, $V_0 = V$ and for convenience we let $U_k = V_\ell$ 
denote the empty path.

By Lemma~\ref{especial}, either $[u_{k-1}w]$ or $[v_{\ell-1}w]$ is essentially right-special.
If $[u_{k-1}w]$ is essentially right-special, then so is $U$.

\begin{figure}
\begin{center}
\includegraphics{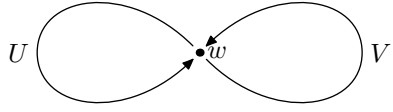}
\end{center}
\caption{The Rauzy graph $\cG_n(\bfx)$ with a bispecial word $w$ and two cycles $U$, $V$}\label{fig1}
\end{figure}

\begin{lemma}\label{lem:figure8}
Suppose that $\bfx$ is not ultimately periodic
and the Rauzy graph $\cG_n(\bfx)$ is of $\infty$-shape with the bi-special word $w$ and two cycles $U = [w u_1 \dots u_{k-1} w]$, $V = [w v_1 \dots v_{\ell-1} w]$.
We assume that, for any $m > n$, we have
$$
\frac{p(m,\mathbf x)}{m} \leq \frac{4}{3}. 
$$
Assume further that $U$ is essentially right-special.

\noindent
(i)  
If $U V^{b} U$ is a factor of $\bfx$ for some integer $b \ge 2$, then
$$ 
k \ge (2b-3)\ell +4.
$$

\noindent
(ii) 
If  $UV^bU$ and $U V^{b'} U$ are factors of $\bfx$ for some integers $b, b' \ge 1$, then $b = b'$.
\end{lemma}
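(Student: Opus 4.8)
I give a plan for part (ii), the final statement; the argument is self-contained and does not invoke part (i). The plan is to argue by contradiction. Suppose there are integers $b < b'$, both at least $1$, with $UV^bU$ and $UV^{b'}U$ both factors of $\bfx$. First I would compare these two factors. They share the prefix $UV^b$ and then diverge: in $UV^bU$ the prefix $UV^b$ is continued by the first edge of $U$ leaving $w$, whereas in $UV^{b'}U = U V^b V^{b'-b} U$ (with $b'-b \ge 1$) it is continued by the first edge of $V$ leaving $w$. Since $w$ is right-special, these two edges carry different letters, so $P := UV^b$, a word of length $n + k + b\ell$, is right-special.

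Next I would locate $P$ relative to the essential right-special words. The length-$(n+1)$ suffix of $P$ is the edge $[v_{\ell-1}w]$, while the hypothesis that $U$ is essentially right-special means the essential right-special word of length $n+1$ is $[u_{k-1}w]$. As $w$ is left-special with two distinct incoming edges, $[v_{\ell-1}w] \ne [u_{k-1}w]$; since the hypothesis gives $\liminf_m p(m,\bfx)/m \le 4/3 < 2$, Lemma~\ref{especial} applies and the essential right-special word of each length is unique, so $[v_{\ell-1}w]$, and hence $P$, is \emph{not} essential. Now every suffix of a right-special word is right-special, so for each length $m$ with $n < m \le n + k + b\ell$ the length-$m$ suffix of $P$ is right-special and ends in $[v_{\ell-1}w]$, whereas the essential right-special word of length $m$ ends in $[u_{k-1}w]$. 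These two words differ in their $(n+1)$-st letter from the right and are therefore distinct, so $\bfx$ has at least two right-special words of every length $m \in (n, n+k+b\ell]$.

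I would then convert this into a complexity estimate, using that $p(m+1,\bfx) - p(m,\bfx)$ is at least the number of right-special words of length $m$ (each such word has at least two right extensions). Summing the bound $\ge 2$ over the $k + b\ell$ lengths in the range and invoking \eqref{eq:MH} once more gives
$$
p(n + k + b\ell + 1,\bfx) \ge p(n+1,\bfx) + 2(k + b\ell) \ge (n+2) + 2(k + b\ell).
$$
The last ingredient is an upper bound on $n$: the reduced graph $\cG'_n(\bfx)$ of $\infty$-shape has exactly $k + \ell - 1$ vertices, so $p(n, \bfz_{n+1}) = k + \ell - 1$; since $\bfz_{n+1}$ is a tail of $\bfx$ and hence not ultimately periodic, \eqref{eq:MH} forces $k + \ell - 1 \ge n+1$, that is $n \le k + \ell - 2$. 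Plugging this in, a short computation shows that the bound above exceeds $\frac43(n + k + b\ell + 1)$ precisely when $n < 2k + 2b\ell + 2$, and indeed $2k + 2b\ell + 2 - n \ge k + (2b-1)\ell + 4 > 0$ because $b \ge 1$. This contradicts the hypothesis $p(m,\bfx) \le \frac43 m$ for all $m > n$, so $b = b'$.

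The main obstacle I anticipate lies in the middle step: one must verify that the two families of right-special words remain genuinely distinct across the \emph{entire} range of lengths, and that this range, of size $k + b\ell$, is long enough to overpower the $\frac43$ threshold. Both points rest on the two structural inputs, and getting them to interlock is the delicate part: the uniqueness of the essential right-special word (Lemma~\ref{especial}) is what keeps the chain of suffixes of $P$ permanently separated from the essential chain, while the Morse--Hedlund bound applied to the recurrent word $\bfz_{n+1}$ is exactly the estimate $n \le k+\ell-2$ that makes the count decisive. Part (i) can be attacked by an analogous count, quantifying the right-special words produced by the $b-1$ internal repetitions of $V$ inside $UV^bU$, but its bookkeeping is heavier and I would treat it separately.
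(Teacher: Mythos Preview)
Your argument for part (ii) is correct and follows the same strategy as the paper: exhibit a non-essential right-special word whose $(n{+}1)$-suffix is $[v_{\ell-1}w]$, pair its suffixes with the essential chain (whose $(n{+}1)$-suffix is $[u_{k-1}w]$) to force $p(m+1)-p(m)\ge 2$ over a long interval of lengths, and check that the resulting lower bound violates $p(m)\le\tfrac43 m$. The only difference is cosmetic. You work with the single word $P=UV^{b}$ (taking $b$ to be the smaller exponent), giving a range of $k+b\ell$ lengths; the paper instead takes $b$ to be the larger exponent and combines the two right-special words $UV^{b'}$ and $V^{b-1}$, obtaining a range of $k+(b-1)\ell$ lengths. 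Both counts reduce, after using $n\le k+\ell-2$, to the same inequality $k+(2b-1)\ell+4>0$, so your streamlined version is a mild simplification of the paper's proof.

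One small slip: the lemma hypothesis is that $\cG_n(\bfx)$, not the reduced graph, is of $\infty$-shape, so the bound $n\le k+\ell-2$ follows directly from $p(n,\bfx)=k+\ell-1$ and Morse--Hedlund for $\bfx$; there is no need to pass through $\bfz_{n+1}$. (Under the hypotheses every edge of the $\infty$-shape graph is in fact recurrent, so $\bfz_{n+1}=\bfx$ and your detour is harmless, just unnecessary.) Your closing remark on part (i) is also on target: the paper's proof of (i) is precisely the ``analogous count'' you describe, using that $V^{b-1}$ is right-special.
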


\begin{proof}
(i)
If $UV^bU$ is a factor of $\bfx$,
then $V^{b-1}$ is a right-special word.
Let $m$ be an integer with $n +1 \le m \le n + (b-1)\ell$.
Then by Lemma~\ref{special}, the suffix of $V^{b-1}$ of length $m$ is a right-special word. 
Since $U$ is essentially right-special, there is another right-special word ending with $[u_{k-1}w]$ of length $m$. 
Therefore, $p(m+1,\bfx) \ge p(m,\bfx) +2$ for each $n+1 \le m \le n+(b-1)\ell$.
Thus
$$p(n+(b-1)\ell+1,\bfx) \ge p(n+1,\bfx) + 2 (b-1) \ell.$$
Since $p(n+1,\bfx) = k+ \ell \ge n+2$,
we obtain 
$$
\frac{4}{3} \ge \frac{p(n+(b-1)\ell+1,\mathbf x)}{n+(b-1) \ell+1} \ge \frac{k+(2b-1)\ell}{k+b\ell-1} = 1+ \frac{(b-1)\ell+1}{k+b\ell-1}.
$$
Therefore, we get
$$
k \ge (2b-3)\ell+4. 
$$

(ii) We may assume that $b > b'$.
Note that $V^{b-1}$ and $UV^{b'}$ are right-special and 
by Lemma~\ref{special} the suffixes of them are right-special. 
The number of suffixes of $V^{b-1}$
ending by $[v_{\ell-1}w]$ is $(b-1)\ell$ and 
the number of suffixes of $UV^{b'}$
ending by $[v_{\ell-1}w]$ is $k + b'\ell$.
Since $b'\ell$ of these suffixes coincide, 
the number of right-special subwords of $\bfx$ ending by $[v_{\ell-1}w]$ 
is at least
\[
 (b-1)\ell+k+b'\ell-b'\ell=k+(b-1)\ell,
\] and so
\begin{align*}
    p(n+k+(b-1)\ell+1,\mathbf x) &\ge p(n+1,\bfx) +2k+ 2(b-1)\ell \\
    &=3k+(2b-1)\ell.
\end{align*}
Thus, since $b\ge 2$, we have
$$
\frac{p(n+k+(b-1)\ell+1,\mathbf x)}{n+k+(b-1)\ell+1} \ge \frac{3k+(2b-1)\ell}{2k+b\ell-1} 
> \frac{4}{3},
$$
which gives a contradiction.
\end{proof}

\begin{definition}
Suppose that the Rauzy graph $\cG_n(\bfx)$ consists of one bispecial word $w$ and two cycles $U$, $V$. 
We call $U$ a special cycle if $U$ is essentially right-special.
A Rauzy graph $\cG_n(\bfx)$ with a bispecial word $w$, a special cycle $U$, and a non-special cycle $V$ is called a Rauzy graph of configuration $(w, U, V)$.
We say that the Rauzy graph $\mathcal{G}_{n} (\bfx)$ of configuration $(w,U,V)$ has multiplicity $b \ge 1$ 
if the concatenated path $UV^bU$ is a factor of $\bfx$.
\end{definition}

We note that for Sturmian words, a \emph{standard pair} (cf. \cite{Loth02}*{Section 2.2.1}) $(U, V)$  is mapped to $(U,UV)$ or $(VU,V)$.
It corresponds to the case $b =1$ in the evolution of the Rauzy graph, after the Rauzy graph of configuration $(w, U,V)$, the next graph of $\infty$-shape has configuration $(U, UU, UVU)$. 
If $b = 1$ for all large $n$ with $\mathcal G_n(\bfx)$ of $\infty$-shape, 
then the word $\bfx$ is quasi-Sturmian.

\begin{proposition}\label{bge1}
Suppose that the reduced Rauzy graph $\cG'_n(\bfx)$ is of configuration $(w, U, V)$ with multiplicity $b \ge 1$ and let $|U|_n = k$ and $|V|_n = \ell$. 
Assume that 
$$
\frac{p(m,\mathbf x)}{m} \leq \frac 43 \quad \text{ for } m \ge n.
$$
Then the reduced Rauzy graph $\mathcal G'_{n+k}(\bfx)$ is of configuration either $(U, UU, UV^bU)$
or $(U, UV^bU, UU)$
with the bispecial word $U$
and the two cycles of concatenated paths $UU$ and $UV^bU$ (see Figure~\ref{fig2}).
Furthermore, we have   
\begin{equation}\label{pboundslope}
p(n+(b-1)\ell+1,\bfz_{n+k+1}) = 
k + (2b-1)\ell 
\end{equation} 
and
\begin{equation}\label{sjump}
s_{n+(b-1)\ell+2}=s_{n+k+1}.
\end{equation}
Moreover,
if $k < \ell$ and $s_{n+1} < s_{n+k+1}$
with $b = 1$, then 
there exists $i$, $1 \le i \le \ell-1$ such that 
\begin{equation}\label{bddcycle}
\bfz_{n+1} = V_i V U \dots, \ \mathbf z_{n+k+1} = V U \dots ,\ \text{ and } \ s_{n+k+1} - s_{n+1} = |V_i|_n,
\end{equation}
\end{proposition}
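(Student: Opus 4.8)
The plan is to use that, since $\cG'_n(\bfx)$ is of $\infty$-shape, the recurrent word $\bfz_{n+1}$ is coded as an infinite concatenation of the two cycles $U$ and $V$, and to read off the evolution by regrouping these cycles around occurrences of the length-$(n+k)$ word $U$. First I would record the consequences of the multiplicity hypothesis: by Lemma~\ref{lem:figure8}(ii) the integer $b$ with $UV^bU$ a factor is unique, so in the coding of $\bfz_{n+1}$ over $\{U,V\}$ every maximal block of $V$'s has length exactly $b$, while the blocks of $U$'s are unconstrained. Since $U$ is essentially right-special it is in particular right-special (Lemma~\ref{special}), so $UU$ and $UV$ are factors; together with the factor $VU$ coming from $UV^bU$ this shows that $U$, as a vertex of $\cG_{n+k}$, has in- and out-degree two and hence is bispecial, and that its two return paths are exactly $UU$ (next block a $U$) and $UV^bU$ (next blocks the $V$-run followed by a $U$). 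Every other recurrent length-$(n+k)$ word lies inside a deterministic stretch of a cycle and so has in- and out-degree one; thus $U$ is the unique bispecial word and $\cG'_{n+k}(\bfx)$ is of $\infty$-shape with the two cycles $UU$, $UV^bU$. By Lemma~\ref{especial} the essential right-special word of length $n+k$ is unique and must be $U$, and its continuation enters exactly one of the two cycles, which selects the configuration $(U,UU,UV^bU)$ or $(U,UV^bU,UU)$.

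For the quantitative statements I would count right-special words length by length in $\bfz_{n+k+1}$, where the chopped-off prefix no longer interferes. For $n+1\le m\le n+(b-1)\ell$ there are exactly two recurrent right-special words of length $m$: the unique essential one (a suffix of $U$) and the suffix of $V^{b-1}$ of length $m$. The latter is right-special because the word formed by $b-1$ consecutive $V$-cycles occurs both as positions $1,\dots,b-1$ and as positions $2,\dots,b$ of a $b$-run and so is followed sometimes by $V$ and sometimes by $U$; at length $n+(b-1)\ell+1$ one extra backward letter distinguishes these two occurrences (the cycle preceding the block is $U$ in the first case and $V$ in the second), so only the essential word survives. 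Each such word has out-degree two, whence $p(m+1,\bfz_{n+k+1})=p(m,\bfz_{n+k+1})+2$ on this range; starting from $p(n+1,\bfz_{n+k+1})=k+\ell$ this gives \eqref{pboundslope}. For \eqref{sjump} I would note that on the range $n+(b-1)\ell+1\le m\le n+k$ only the essential right-special word remains, so the complexity of $\bfz_m$ increases by one at each such step; Lemma~\ref{nn+1} then upgrades $m$-recurrence to $(m+1)$-recurrence, i.e. $s_{m+1}=s_m$, yielding $s_{n+(b-1)\ell+2}=\dots=s_{n+k+1}$.

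The last assertion \eqref{bddcycle} is where the absence of any recurrence assumption on $\bfx$ really enters. When $b=1$ no two $V$-cycles are ever adjacent in the recurrent part, so the only non-recurrent words of length $n+2$ are the ``$VV$-junctions'', reading the end of one $V$-cycle immediately followed by the start of another. Combining $s_{n+1}<s_{n+k+1}$ with \eqref{sjump}, which for $b=1$ reads $s_{n+2}=s_{n+k+1}$, forces $s_{n+1}<s_{n+2}$, so $\bfz_{n+1}$ fails to be $(n+2)$-recurrent and its non-recurrent prefix must carry such a junction. Reading this back into the cycle structure forces the prefix to begin inside a $V$-cycle, at some $v_i$, to complete it as $V_i$, then to traverse a full $V$ (supplying the junction) before rejoining the recurrent pattern with a $U$; that is, $\bfz_{n+1}=V_iVU\dots$. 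Deleting exactly $V_i$ removes the junction and positions the word at the vertex $w$ beginning the second $V$, so $\bfz_{n+2}=\bfz_{n+k+1}=VU\dots$ and $s_{n+k+1}-s_{n+1}=|V_i|_n$. To pin down that the truncation is exactly $|V_i|_n$ and that $1\le i\le\ell-1$, I would feed $\bfz_{n+1}\ne\bfz_{n+2}$ into Lemma~\ref{prefix1}: it identifies the length-$(n+1)$ prefix of $\bfz_{n+2}$ as a left-special word $wy$ with $w$ the unique bispecial word, and it is precisely the once-occurring junction that makes the $V$-entering $wy$ left-special, fixing the location on the $V$-cycle.

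I expect the main obstacle to be this last step: cleanly separating the right-special and left-special words manufactured by the one-time non-recurrent prefix from those produced by the recurrent figure-eight, and then locating exactly where the prefix must be truncated. The hypothesis $k<\ell$ should be what guarantees that this truncation has already stabilised at level $n+2$, rather than being overtaken by the evolution of the shorter cycle $U$, and verifying this—together with checking that no further junction hides deeper in the prefix—is the delicate part of the argument.
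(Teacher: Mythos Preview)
Your outline for the configuration of $\mathcal G'_{n+k}$ and for \eqref{pboundslope} is sound; counting recurrent right-special words level by level is a legitimate alternative to the paper's direct enumeration of all recurrent $(n+(b-1)\ell+1)$-words, and it gives the same total $k+(2b-1)\ell$.

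There is, however, a genuine circularity in your argument for \eqref{sjump}. You want to apply Lemma~\ref{nn+1} to $\bfz:=\bfz_{n+(b-1)\ell+2}$ at level $m$, which requires $p(m+1,\bfz)=p(m,\bfz)+1$. But your right-special count was carried out in $\bfz_{n+k+1}$, i.e.\ among \emph{recurrent} $(m+1)$-factors; the word $\bfz$ could a priori contain additional, non-recurrent $(m+1)$-factors, and ruling those out is precisely the statement $\bfz_m=\bfz_{m+1}$ you are trying to prove. The paper breaks this loop by using Lemma~\ref{prefix1} instead: if some $s_m<s_{m+1}$ in the range, Lemma~\ref{prefix1} produces a \emph{recurrently} bispecial word $W$ of length $m-1\in[n+(b-1)\ell+1,\,n+k-1]$; being a closed walk at $w$ shorter than $|U|$, it must be $V^c$ with $c\ge b$, contradicting Lemma~\ref{lem:figure8}(ii). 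Your right-special count actually dovetails with this: it shows the only recurrent right-special word of such length is a proper suffix of $U$, which does not start at $w$ and hence is not left-special---so no recurrent bispecial word exists. Either way the route is through Lemma~\ref{prefix1}, not Lemma~\ref{nn+1}.

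The more serious gap is in \eqref{bddcycle}. Knowing that the only non-recurrent $(n+2)$-word is the junction $[v_{\ell-1}wv_1]$ tells you, via Lemma~\ref{prefix1}, that $\bfz_{n+2}=VU\ldots$ and that $x_{s_{n+2}}\ldots=[v_{\ell-1}wv_1\ldots]$. But it does \emph{not} by itself force $\bfz_{n+1}$ to start inside that same $V$-cycle at some $v_i$: nothing so far bounds $s_{n+2}-s_{n+1}$, so $\bfz_{n+1}$ could begin inside a $U$-cycle, or several junctions back. The paper supplies the missing quantitative step: apply Lemma~\ref{prefix2} at level $n+2$ to get $p(n+s_{n+2}+1,\bfx)\ge n+2s_{n+2}+2$, combine with $p(n+s_{n+2}+1,\bfx)\le\frac43(n+s_{n+2}+1)$ and $n\le k+\ell-2$, and use $k<\ell$ to obtain $s_{n+2}<\ell-2$. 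This forces $s_{n+2}-s_{n+1}<\ell-1$, so walking back from $w$ at position $s_{n+2}+1$ along recurrent edges one stays on the $V$-cycle and lands at $v_i$ with $i=\ell-(s_{n+2}-s_{n+1})\in\{1,\dots,\ell-1\}$. Thus the role of $k<\ell$ is not that ``the truncation stabilises at level $n+2$'' but that it feeds into this inequality chain; and the global hypothesis $p(m,\bfx)/m\le 4/3$ is genuinely used here, which your sketch omits.
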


\begin{figure}
\begin{center}
\includegraphics{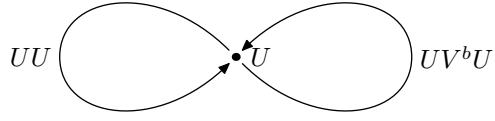}
\end{center}
\caption{The reduced Rauzy graph $\mathcal G'_{n+k}(\bfx)$}\label{fig2}
\end{figure}

\begin{proof}
Since $\bfx$ is not ultimately periodic, both $U$ and $V$ are recurrent.
By Lemma~\ref{lem:figure8} (ii), $UV^bU$ is recurrent as well.
Since $U$ is an essential right-special word, $UU$ is also recurrent.
By Lemma~\ref{lem:figure8}, we note that for any $b \ge 1$ we have 
\begin{equation*}
k \ge (b-1)\ell +1.
\end{equation*}
All the recurrent words of length $n+(b-1)\ell+1$ are prefixes of
$$
\begin{cases}
U_i, &\text{ for } \ 0 \le i \le k-(b-1)\ell-1, \\
U_i U, U_i V^b U, &\text{ for } \ k-(b-1)\ell \le i \le k-1, \\
V_i U, V_i V U, \dots, V_i V^{b-1}U, &\text{ for } \ 0 \le i \le \ell-1.
\end{cases}
$$
Using Lemma~\ref{lem:figure8} (ii), we deduce that there is no other recurrent word of length $n+(b-1)\ell+1$. Consequently, 
\begin{equation}\label{pbl}
\begin{split}
p(n+(b-1)\ell+1,\mathbf z_{n+(b-1)\ell+1})
&= p(n+(b-1)\ell+1,\mathbf z_{n+k+1}) \\
&= k + (2b-1)\ell.
\end{split}
\end{equation}

Suppose $\bfz_{n+(b-1)\ell+2} \ne \bfz_{n+k+1}$.
Then by Lemma~\ref{prefix1}, there exists a recurrent bispecial word $W$ of length $m$ with $n + (b-1)\ell +1 \le m \le n+k-1$.
By Lemma~\ref{special}, the bispecial word $W$ should be a path in $\cG_n$ that starts and ends with $U$ or $V$.
However, since $|U| = n+k$, the word $W$ cannot contain $U$.
Thus $W = V^c$ for some $c > b$.
Since $W$ is recurrent, $UV^{c'}U$ for $c' \ge c$ is a factor of $\bfz_n$,
which is not allowed by Lemma~\ref{lem:figure8} (ii).
Hence, we have $s_{n+(b-1)\ell+2} = s_{n+k+1}$.

We assume that $b = 1$ and $s_{n+1} < s_{n+2} = s_{n+k+1}$.
Since $U$ is the   
special cycle, $UU$, $UV$ are recurrent factors. 
We also check that $VU$ is recurrent, otherwise $\bfx$ is ultimately periodic.
Therefore, by Lemma~\ref{prefix1},
$[v_{\ell-1} w v_1]$ is not a recurrent factor of $\bfx$ and
\begin{equation}\label{b1}
x_{s} x_{s+1} \dots = [ v_{\ell-1} w v_1 \dots ], \qquad s: = s_{n+2}.
\end{equation}
Suppose that $k < \ell$.
We apply Lemma~\ref{prefix2} for $n+2$.
Then we have 
$$
p(n+s_{n+2}+1,\bfx) = p(n+s_{n+2}+1,\bfz_{n+2}) + s_{n+2}
\ge n + 2s_{n+2} +2.
$$
Combined with $p(n+s_{n+2}+1,\bfx) \le \frac 43 (n+s_{n+2}+1)$ and $p(n,\mathbf z_{n+1}) = k + \ell - 1 \ge n +1$, we have  
$$
2s_{n+2} + 2 \le n \le k + \ell -2 < 2\ell -2.
$$
Therefore, $s_{n+2} < \ell -2$ and \eqref{b1} yields that
$$
\bfz_{n+1} = [v_i \dots v_{\ell-1} w v_1 \dots ], \quad \text{ for some $i$ with } \ 2 < i \le \ell-1.   
$$
Hence 
\[
\bfz_{n+1} = V_i V U \dots, \quad \bfz_{n+2} = VU \dots, \ \text{ and } \ s_{n+2} - s_{n+1} = |V_i|_n. \qedhere
\]
\end{proof}

\begin{definition}\label{def_ni}
Suppose that $\bfx$ is not ultimately periodic and $p(n,\mathbf{x}) \le \frac {4n}3$ for every large integer $n$.
By Proposition \ref{bge1}, we have an increasing sequence of integers $(n_i)_{i=0}^\infty$ such that 
the reduced Rauzy graph $\mathcal G'_{n_i} (\mathbf x)$ is of $\infty$-shape. 
We will write $\mathcal G'_{n_{i}} (\mathbf x)$ is of configuration $(w_{(i)},U_{(i)},V_{(i)})$
with multiplicity $b_i$ and $k_i = |U_{(i)}|_{n_i}$, $\ell_i = |V_{(i)}|_{n_i}$.
We note that $n_i = | w_{(i)} |$.
\end{definition}

\begin{definition}\label{def_bc}
We say that $\mathbf x$ has \emph{bounded cycles} if there exists $M$ such that all the reduced Rauzy graphs $\mathcal G'_{n}(\bfx)$ have a cycle whose length is bounded by $M$. 
\end{definition}

\begin{proposition}\label{prop:cycles}
Suppose that $\bfx$ is not ultimately periodic and $p(n,\mathbf{x}) \le \frac {4n}3$ for every large integer $n$.

(i) If $\mathbf x$ has bounded cycles, then 
$k_i$ is constant and $b_i =1$ for all large $i$.

(ii) If $\mathbf x$ does not have bounded cycles, then 
there are infinitely many $n_i$'s such that 
$k_i > \ell_i$.

In both cases, we have 
$\lim_{i\to\infty} \ell_i =\infty.$
\end{proposition}

\begin{proof}
By Proposition~\ref{bge1}, we have 
$$n_{i+1} = |w_{(i+1)}| = | U_{(i)} | = | U_{(i)} |_{n_i} +n_i = k_i + n_i$$ and the reduced Rauzy graph $\mathcal G'_{n_{i+1}}$ is of configuration
$(U_{(i)}, U_{(i)}U_{(i)}, U_{(i)} (V_{(i)})^{b_i}U_{(i)})$
or 
$(U_{(i)}, U_{(i)}(V_{(i)})^{b_i}U_{(i)}, U_{(i)}U_{(i)})$.
If $U_{(i+1)} = U_{(i)}U_{(i)}$, $V_{(i+1)} = U_{(i)}(V_{(i)})^{b_i} U_{(i)}$
then 
\begin{align*}
k_{i+1} &= |U_{(i)}U_{(i)}|_{n_{i+1}}
= |U_{(i)}U_{(i)}|_{n_{i}} - k_i = 2k_i - k_i = k_i, \\
\ell_{i+1} &= |U_{(i)}(V_{(i)})^{b_i}U_{(i)}|_{n_{i+1}}
= |U_{(i)}(V_{(i)})^{b_i}U_{(i)}|_{n_{i}} - k_i
= 2k_i + b_i \ell_i - k_i
=k_i + b_i\ell_i.
\end{align*}
If $U_{(i+1)} = U_{(i)}(V_{(i)})^{b_i}U_{(i)}$, $V_{(i+1)} = U_{(i)}$, then 
$$k_{i+1} = |U_{(i)} (V_{(i)})^{b_i}U_{(i)}|_{n_{i+1}} = k_i + b_i\ell_i, \quad
\ell_{i+1} = |U_{(i)}U_{(i)}|_{n_{i+1}} = k_i.$$
If $k_{i+1} = k_i$ for all large $i$'s, then $k_i$ is bounded and $\ell_i$ goes to infinity.
By Lemma~\ref{lem:figure8}, $b_i=1$ for all large $i$'s.

If $k_{i+1} > k_i$ for infinitely many $i$'s, then $k_{i+1} = k_i + b_i\ell_i > k_i = \ell_{i+1}$ for infinitely many $i$'s.
Note that $(k_i+\ell_i)_{i=0}^{\infty}$ is strictly increasing. 
We see that $k_i$ goes to infinity and so does $\ell_i$.
\end{proof}

\begin{example}
Let 
$\mathbf{x}=U^{a_1}V U^{a_2}V \cdots VU^{a_n}V \cdots$ with $a_1 \le a_2 \le \dots $ for some cycles $U = [w u_1 \dots u_{k-1} w]$ and $V = [w v_1 \dots v_{\ell-1} w]$ with $n$-subwords $w$, $u_j$ ($1\le j \le k-1$) and $v_j$ ($1\le j \le \ell-1$).
Let $n_i$ ($n_i > n$) be the sequence such that the reduced Rauzy graph $\mathcal G'_{n_i} (\bfx)$ is of $\infty$-shape. Then $n_i = n + ik$ and $\mathcal G'_{n_i} (\bfx)$ has cycles $U^{i+1}$ and $U^{i} V U^{i}$.
Therefore, $\bfx$ has bounded cycles. 
\end{example}

\begin{example}
Let $w_0 = 0$, $w_1 = 00011$.
We define recursively 
$$w_{n+1} = w_n w_n w_n w_{n-1} w_{n-1} \quad \text{ for } n \ge 1.$$
We define $\mathbf x = \lim_{n\to\infty} w_n$.
Then $\mathbf x$ is recurrent and $\mathcal G'_{n}(\mathbf x) = \mathcal G_{n}(\mathbf x)$ for all $n$.
Let $n_0 = 3$. 
Then 
$w_{(0)} =000$, $U := U_{(0)} = 00011000$, $V := V_{(0)} =0000$ with multiplicity $b_0=2$. 
Therefore, $k_0 = |00011000|_3 =5$, $\ell_0 = |0000|_3 = 1$. 
In what follows, we define the concatenation of $U$ and $V$ as paths of $\mathcal G_3$. 
In particular, $UU= 0001100011000$ and 
$UVVU = 000110000011000$.
We check that
\begin{align*}
w_{(1)} &= U, & U_{(1)} &= UU, &V_{(1)} &= UVVU, &b_1 &= 1, \\ 
w_{(2)} &= UU, & U_{(2)} &= UUU, &V_{(2)} &= UUVVUU, &b_2 &= 1, \\
w_{(3)} &= UUU, & U_{(3)} &= UUUVVUUU, &V_{(3)} &= UUUU, &b_3 &= 2, \\
w_{(4)} &= U^3V^2U^3, & U_{(4)} &= U^3V^2U^3V^2U^3 , &V_{(4)} &= U^3V^2U^5V^2U^3, &b_4 &= 1
\end{align*}
and 
\begin{align*}
n_1 
&= n_0+k_0 = 8, &k_1 &= k_0 = 5, &\ell_1 &= k_0 + b_0 \ell_0 = 7, \\
n_2 &= n_1+k_1 = 13, &k_2 &= k_1 = 5, &\ell_2 &= k_1 + b_1 \ell_1 = 12, \\
n_3 &= n_2+k_2 = 18, &k_3 &= k_2 + b_2\ell_2 = 17, &\ell_3 &= k_2 = 5, \\
n_4 &= n_3+k_3 = 35, &k_4 &= k_3 = 17, &\ell_4 &= k_3 + b_3\ell_3 = 27.
\end{align*}
\end{example}

\section{Recurrence time of low complexity sequences}

Recall that $s_n$ denotes the length of $n$-th non-recurrent prefix of $\bfx$. Moreover, we use the same notation for $U = [w u_1 \dots u_{k-1} w]$, $V = [w v_1 \dots v_{\ell-1} w]$ as in the previous section.
Throughout this section, $\rho$ is a positive real number less than $4/3$ 
and we assume that $\bfx$ is not ultimately periodic and that 
$p(m,\bfx)/m < \rho$ for sufficiently large $m$.

\begin{lemma}\label{skln}
Let $n$ be given and 
suppose that 
$$
\frac{p(m,\mathbf x)}{m} < \rho
$$
holds for any $m\geq n$.
Suppose that $\mathcal G'_n (\bfx)$ is of $\infty$-shape with two cycles $U$ and $V$. 
Let $b$ be the multiplicity of $\mathcal G'_n (\bfx)$.
Let 
$|U|_n = k$ and $|V|_n = \ell$.
Then we have 
\begin{equation}\label{rec_bound1}
k +  \frac{2b+1} 3\ell < \rho (n + 1)
\end{equation}
and
\begin{equation}\label{rec_bound2}
2s_{n+k+1} + k + \left( 2b-1 \right) \ell  
< \frac{\rho n}{2-\rho}.
\end{equation}
\end{lemma}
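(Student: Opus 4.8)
=== PROOF PLAN (Lemma \ref{skln}) ===

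The plan is to extract both inequalities from the growth constraint $p(m,\bfx)/m < \rho$ by producing enough right-special words to force the complexity function up at specific lengths. For \eqref{rec_bound1}, I would combine the multiplicity bound from Lemma~\ref{lem:figure8}(i) with the complexity count at length $n+1$. Since $\mathcal G'_n$ is of $\infty$-shape with cycles of lengths $k$ and $\ell$, the reduced graph has exactly $k+\ell$ vertices, so $p(n,\bfz_{n+1}) = k+\ell$ and the recurrent part contributes $p(n+1,\bfz_{n+1}) = k+\ell$ as well (one bispecial vertex with two outgoing edges). The factor $UV^bU$ being present means $V^{b-1}$ is right-special; exactly as in the proof of Lemma~\ref{lem:figure8}(i), each length $m$ with $n+1 \le m \le n+(b-1)\ell$ carries two right-special words (one ending in $[v_{\ell-1}w]$, one ending in $[u_{k-1}w]$ via the essential cycle $U$), giving $p(m+1,\bfx) \ge p(m,\bfx)+2$ over that range. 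Feeding the resulting lower bound $p(n+(b-1)\ell+1,\bfx) \ge k + (2b-1)\ell$ into $p(m,\bfx) < \rho m$ at $m = n+(b-1)\ell+1$ should, after clearing denominators and using $k+\ell \ge n+1$ (from Morse--Hedlund, since $p(n,\bfx)\ge n+1$ forces $k+\ell \ge n+1$), collapse to $k + \frac{2b+1}{3}\ell < \rho(n+1)$. The rearrangement here is the routine part; the content is choosing the right length $n+(b-1)\ell+1$ and counting right-special words correctly.

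For \eqref{rec_bound2} the new ingredient is the non-recurrent prefix length $s_{n+k+1}$, so I would invoke Lemma~\ref{prefix2} to convert the gap between $p(\cdot,\bfx)$ and $p(\cdot,\bfz)$ into an additive $s$-term. Applying Lemma~\ref{prefix2} at an appropriate length $N$ gives $p(N,\bfx) = p(N,\bfz_{N-s+1}) + s$ where $s$ is the relevant non-recurrent prefix length; the target inequality's left-hand side $2s_{n+k+1} + k + (2b-1)\ell$ suggests evaluating at a length where the reduced-graph complexity is exactly $k+(2b-1)\ell$, which by \eqref{pbl}/\eqref{pboundslope} happens at $n+(b-1)\ell+1$. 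Combining $p(n+(b-1)\ell+1,\bfz) = k+(2b-1)\ell$ with the prefix formula and $p(\cdot,\bfx) < \rho(\cdot)$ should yield an inequality relating $s$ and the other quantities. The factor $\frac{\rho}{2-\rho}$ on the right strongly hints that I will need to eliminate $s$ appearing on both sides — the length at which I apply the complexity bound itself contains $s$, so $\rho$ multiplies $(n + s + \ldots)$, producing $s(\rho-1)$ or similar on the wrong side and forcing a division by $(2-\rho)$ after I also use $n \ge k+\ell-2$ or the analogous Morse--Hedlund lower bound.

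I expect the main obstacle to be bookkeeping the interplay between the three non-recurrent prefix lengths $s_{n+1}$, $s_{n+(b-1)\ell+2}$, and $s_{n+k+1}$ and justifying that the length at which Lemma~\ref{prefix2} is applied uses the correct value $s_{n+k+1}$. By \eqref{sjump} we have $s_{n+(b-1)\ell+2} = s_{n+k+1}$, so the prefix length is stable across the whole range $[n+(b-1)\ell+2, n+k+1]$, which is exactly what lets me pair the complexity value $k+(2b-1)\ell$ (valid on the reduced graph at length $n+(b-1)\ell+1$) with the prefix length $s_{n+k+1}$. Getting this alignment right — i.e. confirming that $s_{n+(b-1)\ell+1}$ or $s_{n+(b-1)\ell+2}$ equals $s_{n+k+1}$ and applying Lemma~\ref{prefix2} at a length inside the stable window — is the delicate step; once the correct length is fixed, the rest is the algebra of substituting into $p(m,\bfx) < \rho m$ and solving for the displayed bound.
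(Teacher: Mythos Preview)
Your plan is correct and matches the paper's proof: derive \eqref{rec_bound1} from $p(n+(b-1)\ell+1,\bfx)\ge k+(2b-1)\ell$, and then obtain \eqref{rec_bound2} by applying Lemma~\ref{prefix2} at level $n+(b-1)\ell+2$ (using $s_{n+(b-1)\ell+2}=s_{n+k+1}$ from \eqref{sjump}) to get $p(n+(b-1)\ell+s+1,\bfx)\ge k+(2b-1)\ell+2s$, feeding this into $p<\rho m$, and dividing by $(2-\rho)/2$ after using the Morse--Hedlund bound. Two small slips to correct in execution: the vertex count is $p(n,\bfz_{n+1})=k+\ell-1$ (not $k+\ell$), and Morse--Hedlund gives $n\le k+\ell-2$, not $n\ge k+\ell-2$; with the inequality oriented correctly the algebra goes through exactly as you outline.
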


\begin{proof}
By Proposition~\ref{bge1}, we have $s := s_{n+(b-1)\ell+2} = s_{n+k+1}$. 
We apply Lemma~\ref{prefix2} with $n$ replaced by $n+(b-1)\ell+2$. Then, 
combined with \eqref{pboundslope}, we have 
\begin{equation*}
\begin{split}
p(n+(b-1)\ell + s+1, \bfx)
&= p(n+(b-1)\ell + s+1,\bfz_{n+(b-1)\ell+2}) + s \\
&\geq p(n +(b-1)\ell+1, \bfz_{n+(b-1)\ell+2})+ 2s\\
&=k+(2b-1)\ell+2s.
\end{split}
\end{equation*}
By using $p(n +(b-1)\ell+ s +1, \bfx) < \rho(n +(b-1)\ell+ s +1)$, we deduce that 
\begin{equation}\label{ineq}
(2-\rho) s + k + ( 2b-1 -\rho(b-1) )\ell < \rho (n+1).
\end{equation}
Therefore, we have
\begin{equation*}
k + \left( \frac{2b}3 + \frac 13 \right)\ell \leq k + \left( 2b-1-\rho (b-1) \right)\ell < \rho (n + 1),
\end{equation*}
which is \eqref{rec_bound1}.

We note that, since $p(n,\mathbf z_{n+1}) = k + \ell - 1 \ge n +1$, we have 
\begin{equation}\label{klbound}
n \le k + \ell - 2.
\end{equation}
Combined with \eqref{ineq}, this gives 
\begin{equation*}
(2-\rho) s + k + \ell + (2-\rho)(b-1)\ell < 
\frac{\rho}{2} n + \frac{\rho}{2} (k+ \ell-2) + \rho,
\end{equation*}
that is, 
\begin{equation*}
(2-\rho) s + 
\left(1-\frac{\rho}{2}\right) (k + \ell) + (2-\rho)(b-1)\ell < \frac{\rho}{2} n. 
\end{equation*}
By dividing by $1-\frac{\rho}{2}$, we obtain \eqref{rec_bound2}.
\end{proof}

In the rest of the paper,  we put
$$\delta := \frac{4-3\rho}{2(1+2\rho)(2-\rho)}.$$

\begin{lemma}\label{ubbounded}
We use the same notation as in Lemma \ref{skln}.
Let $n$ be large enough. 
Then we have 
\begin{equation}\label{deltabound1}
s_{n+k+1} + 2k + b\ell < (1- \delta)(n + 2k + b\ell).
\end{equation}
For $k' = 0, \ldots , k-1$,
we have either 
\begin{equation}\label{deltabound22}
s_{n+k+1} + 2k + b\ell
< (1- \delta)(n + k + b\ell + k')
\end{equation}
or
\begin{equation}\label{deltabound21}
s_{n+k+1} + 2k + b\ell + k'
< (1- \delta)(n + 2k).
\end{equation}
Moreover, if $\ell < k$, then 
\begin{equation}\label{deltabound3}
s_{n+k+1} + 2k + b\ell
< (1- \delta)(n + 2k).
\end{equation}
\end{lemma}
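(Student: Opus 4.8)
The plan is to reduce each of the four inequalities to an elementary linear inequality in $k$, $\ell$ and $n$ (with parameter $b \ge 1$), eliminating $s_{n+k+1}$ by means of the two bounds \eqref{rec_bound1} and \eqref{rec_bound2} of Lemma~\ref{skln}, and then to exploit the single algebraic identity
$$
(1-\delta) - \frac{\rho}{2(2-\rho)} = 2\rho\delta,
$$
which follows at once from $1-\delta = \frac{\rho(9-4\rho)}{2(1+2\rho)(2-\rho)}$ and is the linchpin that makes every reduction collapse. Throughout I abbreviate $s = s_{n+k+1}$, recalling that \eqref{rec_bound2} yields $s < \frac{\rho n}{2(2-\rho)} - \frac k2 - \frac{(2b-1)\ell}{2}$, while \eqref{rec_bound1} bounds $k$, $\ell$ and $k+\ell$ above by $\rho(n+1)$.

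For \eqref{deltabound1} I would first rewrite the claim as $s + 2\delta k + \delta b\ell < (1-\delta)n$. Substituting the bound on $s$ and applying the identity, this reduces to
$$
\Bigl(2\delta - \tfrac12\Bigr)k + \Bigl(b(\delta-1)+\tfrac12\Bigr)\ell \le 2\rho\delta\, n.
$$
Since $\delta < \tfrac12$, both bracketed coefficients on the left are negative for every $b \ge 1$, so the left-hand side is negative while the right-hand side is positive; thus \eqref{deltabound1} holds with no further work, and indeed without recourse to \eqref{rec_bound1} or to large $n$.

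The dichotomy \eqref{deltabound22}--\eqref{deltabound21} is the main obstacle, and I would argue by contradiction. Writing $A = s+2k+b\ell$, the failure of both reads $A \ge (1-\delta)(n+k+b\ell)+(1-\delta)k'$ and $A \ge (1-\delta)(n+2k)-k'$. The decisive point, which is not obvious a priori, is that combining the first with weight $1$ and the second with weight $(1-\delta)$ cancels the $k'$ terms and gives
$$
(2-\delta)A \ge (1-\delta)(n+k+b\ell) + (1-\delta)^2(n+2k).
$$
Inserting $2A < \frac{\rho n}{2-\rho}+3k+\ell$ from \eqref{rec_bound2}, dividing by $(2-\delta)$ and using the identity, the sought contradiction reduces to
$$
\Bigl(\tfrac32 - (1-\delta)\tfrac{3-2\delta}{2-\delta}\Bigr)k + \Bigl(\tfrac12 - \tfrac{(1-\delta)b}{2-\delta}\Bigr)\ell \le 2\rho\delta\, n.
$$
For $b \ge 2$ the $\ell$-coefficient is negative and may be discarded; for $b=1$ both coefficients are positive with the $k$-coefficient the larger, so in either case the left side is at most a positive multiple of $k$ (resp.\ of $k+\ell$), which \eqref{rec_bound1} bounds by $\rho(n+1)$. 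Everything then comes down to $\frac{7/2-2\delta}{2-\delta}(n+1) \le 2n$; as $\frac{7/2-2\delta}{2-\delta} < 2$ always (equivalent to $7/2 < 4$), this holds for all large $n$, which is exactly where the hypothesis that $n$ be large enough is used. This step is delicate because it is the only place where the two bounds of Lemma~\ref{skln} must be played against one another simultaneously, after the nonobvious weighting that kills $k'$.

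Finally, \eqref{deltabound3} is the $k'=0$ instance of \eqref{deltabound21}, and the extra hypothesis $\ell < k$ forces precisely this branch. The same substitution of \eqref{rec_bound2} reduces it to $\bigl(2\delta-\tfrac12\bigr)k + \tfrac12\ell \le 2\rho\delta\,n$. Since $2\delta-\tfrac12 < 0$ and $k > \ell$, the left side is strictly less than $2\delta\ell$; feeding $k > \ell$ back into \eqref{rec_bound1} sharpens the bound on $\ell$ to $\ell < \frac{3\rho(n+1)}{2(b+2)}$, whence $2\delta\ell < 2\rho\delta\,n$ for all $n \ge 1$. The small trick here, namely using $\ell < k$ a second time inside \eqref{rec_bound1} to beat $\ell$ down, is what makes the crude estimate $\ell < \rho(n+1)$ suffice.
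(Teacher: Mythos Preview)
Your proof is correct, but it follows a different path from the paper's. The paper first extracts from \eqref{rec_bound1} the auxiliary bound $2k+b\ell<2\rho n$ (valid once $\ell\ge 3$, which is where ``$n$ large'' enters), and thereafter uses only the crude consequence $s_{n+k+1}<\frac{\rho n}{2(2-\rho)}$ of \eqref{rec_bound2} together with the identity $\frac{\rho}{2(2-\rho)}+(1+2\rho)\delta=1$. For the dichotomy the paper argues directly: from \eqref{rec_bound2} one has $(s+k-k')+(s+b\ell+k')\le 2s+k+(2b-1)\ell<\frac{\rho n}{2-\rho}$, so one of the two summands is below $\frac{\rho n}{2(2-\rho)}$, and each branch then combines with the auxiliary bound to yield \eqref{deltabound22} or \eqref{deltabound21}.

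You instead keep the full strength of \eqref{rec_bound2}, never isolate an ``extrabound'', and for the dichotomy take the weighted combination with weights $1$ and $1-\delta$ to kill $k'$. Both routes use the same algebraic identity (yours in the equivalent form $(1-\delta)-\frac{\rho}{2(2-\rho)}=2\rho\delta$). Your approach has the pleasant feature of showing that \eqref{deltabound1} and \eqref{deltabound3} hold without any largeness assumption on $n$; only the dichotomy needs it, and there your bound $\frac{7/2-2\delta}{2-\delta}(n+1)\le 2n$ is explicit. The paper's approach is shorter once the auxiliary bound is in hand, and the splitting of \eqref{rec_bound2} for the dichotomy is more transparent than your weighting trick.
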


\begin{proof}
By \eqref{rec_bound1} and the last statement of Proposition~\ref{prop:cycles}, for large $n$ such that $\ell \ge 3 > 2 \rho$, we have 
\begin{equation}\label{extrabound}
2k + b\ell < 2 \rho n .
\end{equation}
Then we have by \eqref{rec_bound2}
\begin{align*}
s_{n+k+1} + \delta (n + 2k + b\ell) < 
\frac{\rho n}{2(2-\rho)} + (1+2\rho) \delta n
= \frac{\rho n}{2(2-\rho)} + \frac{4-3\rho}{2(2-\rho)} n = n.
\end{align*}
Therefore, we deduce \eqref{deltabound1}.

From \eqref{rec_bound2}, for $k' = 0, \ldots , k-1$,
we have either 
\begin{equation}\label{condition1}
s_{n+k+1} + k < \frac{\rho n}{2(2-\rho)} + k'
\end{equation}
or
\begin{equation}\label{condition2}
s_{n+k+1} + b\ell + k' < \frac{\rho n}{2(2-\rho)}.
\end{equation}
If \eqref{condition1} holds, then by \eqref{extrabound}
\begin{equation*}
s_{n+k+1} + k + \delta (n + k + b\ell +k') < 
\frac{\rho n}{2(2-\rho)} + k' + (1+2\rho) \delta n 
=n +k',
\end{equation*}
which implies \eqref{deltabound22}.
If \eqref{condition2} holds, then by \eqref{extrabound}
\begin{align*}
s_{n+k+1} + b\ell + k' + \delta (n + 2k) < 
\frac{\rho n}{2(2-\rho)} + (1+2\rho) \delta n
= n.
\end{align*}
which implies \eqref{deltabound21}.

If $\ell < k$, then by \eqref{rec_bound2}
\begin{equation*}
s_{n+k+1}  + b\ell < \frac{\rho n}{2(2-\rho)}.
\end{equation*}
Using \eqref{extrabound}, we have 
\begin{equation*}
s_{n+k+1} + b\ell + \delta (n + 2k) < 
\frac{\rho n}{2(2-\rho)} + (1+2\rho)\delta n
= n. 
\end{equation*}
Therefore, we obtain \eqref{deltabound3}.
\end{proof}


For a sequence $\mathbf{x}$ with bounded cycles, the growth of the subword complexity 
function is due to the growth of the lengths of the initial non-recurrent prefixes
of $\mathbf{x}$.
The case of bounded cycles is considered separately. 

\begin{proposition}\label{prop:bounded}
If $\mathbf x$ has bounded cycles, then for infinitely many integers $n$ we have 
$$
r(n,\mathbf x) 
< (1-\delta)n .
$$
\end{proposition}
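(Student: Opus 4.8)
The plan is to combine the structural dichotomy of Proposition~\ref{prop:cycles} with the quantitative estimates of Lemma~\ref{ubbounded}. First I would unpack the hypothesis: since $\bfx$ has bounded cycles, Proposition~\ref{prop:cycles}(i) furnishes a fixed integer $k$ and infinitely many indices $i$ for which the $\infty$-shape reduced Rauzy graph $\mathcal G'_{n_i}$ is of configuration $(w_{(i)}, U_{(i)}, V_{(i)})$ with special cycle length $|U_{(i)}|_{n_i} = k$, multiplicity $b = 1$, and $\ell_i := |V_{(i)}|_{n_i} \to \infty$. Fixing one such large index $i$ and writing $n = n_i$, $\ell = \ell_i$, it then suffices to produce an integer $N = N_i$, growing to infinity with $i$, for which $r(N, \bfx) < (1-\delta) N$; letting $i$ vary then yields the required infinitely many $n$.

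The core of the proof is to locate the first repeated $N$-factor and to bound the position of its second occurrence. After deleting the non-recurrent prefix of length $s := s_{n+k+1}$, the recurrent word $\bfz_{n+k+1}$ runs along the $\infty$-shape graph $\mathcal G'_{n+k}$, whose special cycle $UU$ has length $k$ and whose non-special cycle $UV^bU$ has length $k + b\ell$. Because the multiplicity equals $1$, Lemma~\ref{lem:figure8}(ii) forbids two consecutive long cycles, so in terms of the cycles $U, V$ of $\mathcal G_n$ the word $\bfz_{n+k+1}$ reads as $U^{c_1} V U^{c_2} V \cdots$ with each $c_j \ge 1$ (the shape of the Example). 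I would then slide a window of length $N$ along this word starting at position $s+1$ and distinguish two mechanisms for a coincidence: if some initial run $U^{c_j}$ is long, the $k$-periodicity of $\bfz_{n+k+1}$ inside that run makes two windows at distance $k$ equal; otherwise the runs are short, a $V$ is met quickly, and a block straddling the first $V$ recurs. In each case the second occurrence of the repeated factor sits at a position of the controlled form $s + 2k + b\ell$, $s + 2k + b\ell + k'$, or $s + 2k + b\ell$, for the factor length $N$ respectively equal to $n + 2k + b\ell$, $n + 2k$, or $n + k + b\ell + k'$, where $0 \le k' \le k-1$ records the exact alignment.

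It then remains to read off the conclusion from Lemma~\ref{ubbounded}: each of these three outcomes is matched by one of the inequalities \eqref{deltabound1}, \eqref{deltabound21}, \eqref{deltabound22}, which assert precisely that the displayed second-occurrence position is strictly smaller than $(1-\delta)N$ for the corresponding $N$. Hence $r(N, \bfx) < (1-\delta) N$ in every case. As $\ell_i \to \infty$, the lengths $N = N_i$ tend to infinity, producing infinitely many integers $n$ with $r(n, \bfx) < (1-\delta)n$, as required.

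I expect the main obstacle to be the combinatorial bookkeeping in the middle step: showing that, no matter how the short cycles and the isolated long cycle are interleaved in $\bfz_{n+k+1}$, the first coincidence of two $N$-windows occurs at one of the prescribed positions and for the matching length. Two points need particular care — pinning down where $\bfz_{n+k+1}$ first meets the bispecial vertex $U$, so that the window alignments are exactly as claimed, and using the non-periodicity of $\bfx$ to exclude the degenerate scenario in which ever-longer runs of short cycles postpone every repetition. It is here that the rigidity supplied by multiplicity~$1$, namely the absence of two consecutive long cycles, is indispensable.
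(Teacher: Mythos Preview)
Your plan has a genuine gap. The single–level strategy you outline — apply Lemma~\ref{ubbounded} at one fixed $n=n_i$ and match the resulting inequalities \eqref{deltabound1}, \eqref{deltabound21}, \eqref{deltabound22} against a short list of prefix types for $\bfz_{n+k+1}$ — is exactly what drives Proposition~\ref{prop:unbounded}, but there it works only because the extra hypothesis $\ell<k$ unlocks \eqref{deltabound3}. In the bounded-cycle regime you have $k$ fixed and $\ell\to\infty$, so $\ell\gg k$, and \eqref{deltabound3} is unavailable. Concretely, take the prefix $\bfz_{n+k+1}=VU^{c}\ldots$ with $c\ge 3$ (this is case~(5) of Lemma~\ref{cases} with $k'=0$, and it genuinely occurs, e.g.\ in the Example following Proposition~\ref{prop:cycles}). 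The only repetition one reads off is $r(n+2k,\bfx)\le s_{n+k+1}+k+\ell$. If for $k'=0$ it is \eqref{deltabound22} rather than \eqref{deltabound21} that holds, you get $s_{n+k+1}+2k+\ell<(1-\delta)(n+k+\ell)$, and this does \emph{not} imply $s_{n+k+1}+k+\ell<(1-\delta)(n+2k)$ once $\ell$ is large compared with $k$. Your proposed trichotomy therefore does not close; the ``run length'' $c$ (how many copies of $U$ occur before the next $V$) is simply not controlled by the level-$n_i$ data.

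The paper's proof avoids this by working across several levels simultaneously. One selects $i$ at which $s_{n_i+1}$ jumps, so that \eqref{bddcycle} forces $\bfz_{n_i+2}=V_{(i)}U_{(i)}\ldots$, and then lets $h$ be the length of the ensuing plateau of $s$. The crucial identity is $s_{n_{i+h}+k+1}-s_{n_i+2}=|V_{(i)}|_{n_i}=\ell+ki$, obtained by comparing $\bfz_{n_i+2}$ with $\bfz_{n_{i+h}+2}$; this converts the uncontrolled $h$ into part of $n_{i+h}$. One then applies \eqref{deltabound1} at level $n_{i+h}$ (not $n_i$), together with the extra margin $(\ell+ki)\delta>2k$ available for large $i$, to bound $r(n_i+(h-1)k,\bfx)$. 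In short, the missing idea in your plan is this multi-level bookkeeping linking $s$ at two successive jump times; without it the bounded-cycle case cannot be reduced to the three inequalities you list.
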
   

\begin{proof}
We use the notations 
introduced in Definition~\ref{def_ni}. 
Using Proposition~\ref{prop:cycles}, we may choose $(n_i)_{i=0}^{\infty}$ as $k_i = k$ is a constant and $b_i = 1$ for all $i \ge 0$.
Set $(w,U,V):=(w_{(0)},U_{(0)},V_{(0)})$
and $\ell := \ell_0$. 
Then we have $w_{(i)}=U^i$, $U_{(i)}=U^{i+1}$ and $V_{(i)}=U^i V U^i$ for each $i \ge 1$. 
We note that $n_i=|w_{(i)}|=n_0+ki$, $|U_{(i)}|=n_0+(i+1)k$, $|V_{(i)}|=n_0 +\ell+2ki$ and that $|U_{(i)}|_{n_i}=k$, $|V_{(i)}|_{n_i}=\ell+ki$. 
Since $\bfx$ is not ultimately periodic, $s_{n_i+1}<s_{n_{i+1}+1}$ for infinitely many $i$'s.  
Choose $i$ large enough to ensure that 
\begin{equation}\label{ellbound}
(\ell + ki)\delta > 2k.
\end{equation}
By Proposition~\ref{bge1}, we see that 
$s_{n_i+2}=s_{n_{i+1}+1}=s_{n_i+k+1}$ and 
\begin{align}\label{subword0}
\bfz_{n_i+2}=V_{(i)}U_{(i)}\ldots,
\end{align}
where the concatenation rule follows from the word of length $n_i$.
Let
\[
h = h(i) :=\max\{ j \ge 1 \mid s_{n_{i+1}+1}=s_{n_{i+j}+1}\}.
\]
Then $s_{n_{i+h}+1} < s_{n_{i+h+1}+1}$. 
We, again, apply Proposition~\ref{bge1}. 
Then we have 
\begin{equation}\label{subwordj}
\bfz_{n_{i+h}+1} = \widetilde{V_{(i+h)}} V_{(i+h)} U_{(i+h)} \dots,  \quad
\bfz_{n_{i+h}+2} = V_{(i+h)} U_{(i+h)} \dots
\end{equation}
and 
$$
s_{n_{i+h}+2}-s_{n_{i+h}+1}=|\widetilde{V_{(i+h)}}|_{n_{i+h}},
$$
where the concatenation rule follows from the word of length $n_{i+h}$ and $\widetilde{V_{(i+h)}}$ is a suffix of $V_{(i+h)}= (U_{(i)})^h V_{(i)} (U_{(i)})^h$.
Since $\bfz_{n_i+2} = \bfz_{n_{i+h}+1}$, by \eqref{subword0} and \eqref{subwordj}, we have 
$\widetilde{V_{(i+h)}}=V_{(i)} (U_{(i)})^h$.
Therefore, we have
\begin{equation}\label{new_sjump}
s_{n_{i+h}+k+1} - s_{n_i+2} = s_{n_{i+h}+2}-s_{n_{i+h}+1} = |V_{(i)} (U_{(i)})^h |_{n_{i+h}} = |V_{(i)}|_{n_i} = \ell +ki
\end{equation}
and
$$
\mathbf z_{n_{i+h}+1}  
= V_{(i)} (U_{(i)})^h V_{(i)} (U_{(i)})^{h+1} \dots, \quad \mathbf z_{n_{i+h}+2} = (U_{(i)})^h V_{(i)} (U_{(i)})^{i+h+1} \dots.
$$
Therefore 
\begin{equation}\label{rb}
r(n_i+(h-1)k,\bfx) \le s_{n_i+2} + (i+1) k+ \ell.
\end{equation}

Note that the lengths of the two cycles of $\mathcal G_{n_{i+h}}'$ are 
$$
|U_{(i+h)}|_{n_{i+h}} = k, \qquad 
|V_{(i+h)}|_{n_{i+h}} = \ell+(i+h)k.
$$
Thus, we get by \eqref{deltabound1} that 
\begin{equation*}
s_{n_{i+h}+k+1} + 2k + \ell+(i+h)k < (1- \delta)(n_{i+h}+ 2k + \ell+(i+h)k).
\end{equation*}
Therefore, combined with \eqref{new_sjump}, we have 
\begin{equation*}
s_{n_i+2} + (h+2i+2)k + 2\ell < (1- \delta)(n_i +(2h+i+2)k+\ell).
\end{equation*}
Combined with \eqref{rb} and \eqref{ellbound}, 
we have 
\begin{align*}
r(n_i+(h-1)k,\bfx) &\le s_{n_i+2} + (i+1) k+ \ell \\
&< (1- \delta)(n_i+ (2h+i+2)k +\ell) - (i+h+1)k -\ell\\
&= (1- \delta)(n_i+ 2(h+1)k) - (h+1)k -\delta(ki+\ell)\\
&< (1- \delta)(n_i+ 2(h+1)k) - (h+3)k \\
&< (1- \delta)(n_i +(h-1)k).
\end{align*}
Since there are infinitely many such $n_i$'s, the proof is complete. 
\end{proof}

\begin{lemma}\label{cases}
Suppose that $\mathcal G'_n(\bfx)$ is of configuration $(w,U,V)$ with $|U|_n = k$, $|V|_n = \ell$ and multiplicity $b \ge 1$.
Then $\mathbf z_{n+k+1}$ belongs to one of the following cases,
where $U'$ (resp. $V'$) denotes a suffix of $U$ (resp. $V$) with $0 \leq |U'|_n=:k'<k$ (resp. $0 \leq |V'|_n = : \ell' <\ell$)
\begin{enumerate}
\item If $\mathbf z_{n+k+1} = U'UU \dots $, then $r(n+ k+ k', \bfx) \le s_{n+k+1} + k$.
\item If $\mathbf z_{n+k+1} = U'U V^b UUU \dots $, then $r(n+ k', \bfx) \le s_{n+k+1} + k$ and $r(n+ 2k, \bfx) \le s_{n+k+1} + 2k + b\ell + k'$.
\item If $\mathbf z_{n+k+1} = U'U V^b UU V^b U \dots $, then $r(n+ 2k+ b\ell+k', \bfx) \le s_{n+k+1} + 2k + b\ell$. 
\item If $\mathbf z_{n+k+1} = U'U V^b U V^b U \dots $, then $r(n+ 2k+ b\ell, \bfx) \le s_{n+k+1} + k + b\ell + k'$. 
\item If $\mathbf z_{n+k+1} = U' V^b UUU \dots $, then $r(n+ 2k, \bfx) \le s_{n+k+1} + k + b\ell + k'$. 
\item If $\mathbf z_{n+k+1} = U' V^b UU V^b U \dots $, then $r(n+ (b-1)\ell, \bfx) \le s_{n+k+1} + \ell + k'$ and $r(n+ k+ b\ell+ k', \bfx) \le s_{n+k+1} + 2k + b\ell$.

\item If $\mathbf z_{n+k+1} = U' V^b U V^b U \dots $, then $r(n+ k+ b\ell+ k', \bfx) \le s_{n+k+1} + k + b\ell$. 
\item If $\mathbf z_{n+k+1} = V' V^c UU \dots $ for some $0 \le c < b$, then $r(n+ k, \bfx) \le s_{n+k+1} + k + c\ell + \ell'$.

\item If $\mathbf z_{n+k+1} = V' V^c U V^b U \dots $ for some $0 \le c < b$, then $r(n+ k+ c\ell+ \ell', \bfx) \le s_{n+k+1} + k + b\ell$.

\end{enumerate}
\end{lemma}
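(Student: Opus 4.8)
The plan is to read every bound directly off the second definition of the repetition function: if a single word $W$ of length $m$ occurs in $\mathbf x$ with two occurrences whose $1$-indexed starting positions are $p_1<p_2$, then $r(m,\mathbf x)\le p_2-1$. Hence in each of the nine cases it suffices to display, inside the prescribed prefix of $\mathbf z_{n+k+1}$, two occurrences of one word of the indicated length and to locate the later one. First I would fix coordinates. Put $s:=s_{n+k+1}$, so that $\mathbf z_{n+k+1}=x_{s+1}x_{s+2}\cdots$ and a factor starting at the $t$-th letter of $\mathbf z_{n+k+1}$ starts at position $s+t$ of $\mathbf x$; thus a repetition located at the $t$-th letter yields $r(\cdot,\mathbf x)\le s+t-1$. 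I would then record the prescribed prefix in $\cG_n$-coordinates, using that successive cycles overlap in the length-$n$ vertex $w$, so a block of $\cG_n$-length $j$ contributes exactly $j$ letters beyond the current vertex (for instance $U$ contributes $k$, $V^b$ contributes $b\ell$, and $UV^bU$ contributes $2k+b\ell$). With this dictionary the starting letter of every $U$- or $V$-cycle appearing in a prescribed prefix is an explicit linear form in $k,\ell,b,k',\ell',c$, and each claimed bound is exactly such a form, minus one.

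The repetitions themselves come from two mechanisms. The first is periodicity: whenever the prescribed prefix traverses a $U$-cycle twice in a row (cases (2),(5),(8)), or the block $V^bU$ twice in a row (case (4)), or the $V$-cycle repeatedly inside $V^b$ (case (6)), a factor of the relevant length reappears after a shift equal to the length of the repeated unit, and this shift pins down $p_2$. The second mechanism, used in cases (1),(2),(3),(6),(7),(9), exploits that $U'$ (respectively $V'$) is a \emph{suffix} of $U$ (respectively $V$): completing the initial partial cycle $U'$ (or $V'$) to the full cycle $U$ (or $V$) produces a word of which the prescribed prefix is a suffix, and the prescribed shape guarantees that this completed word occurs again later. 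For example, in case (3) the prefix $U'UV^bU$ is a suffix of $UUV^bU$, and the shape $\mathbf z_{n+k+1}=U'UV^bU\,UV^bU\cdots$ makes $UUV^bU$ occur starting at the $(k'+k+b\ell+1)$-th letter; reading off its suffix gives a second occurrence of $U'UV^bU$ at the $(2k+b\ell+1)$-th letter, whence $r(n+2k+b\ell+k',\mathbf x)\le s+2k+b\ell$. Every one of the nine bounds is obtained by one such bookkeeping computation.

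It then remains to see that the nine cases are exhaustive. By Proposition~\ref{bge1} the reduced graph $\cG'_{n+k}(\mathbf x)$ is of $\infty$-shape with unique branch vertex $U$ and the two cycles $UU$ and $UV^bU$, so $\mathbf z_{n+k+1}$ is an infinite walk in this graph. I would classify it by the forward $\cG_n$-path from its starting vertex to the first occurrence of the branch vertex $U$: this path is a suffix $U'$ of $U$ (start on the $UU$-cycle), or $U'V^bU$ (start inside the first $U$ of $UV^bU$, whose length-$(n+k)$ window already reaches into the $V^b$-part), or $V'V^cU$ with $0\le c<b$ (start inside $V^b$). Appending the next one or two choices between $UU$ and $UV^bU$ then yields precisely the decision tree whose leaves are (1)--(9), with the stated ranges of $k'$, $\ell'$ and $c$.

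I expect this exhaustiveness step, rather than any single position count, to be the main obstacle. The position arithmetic is routine once the overlap convention is fixed, but translating an \emph{arbitrary} starting vertex into the correct $\cG_n$-level prefix is delicate: one must keep the two levels ($\cG_n$ versus $\cG_{n+k}$) straight, handle the degenerate values $k'=0$, $\ell'=0$, $c=0$ uniformly, and in particular recognize that the interior vertices of the first $U$ of the cycle $UV^bU$ force the longer prefix $U'V^bU$ rather than a bare $U'$ — so that a start on that portion merges, as it should, into cases (5)--(7). Getting this classification watertight, and confirming that it produces exactly the nine listed shapes and no others, is where the care will be needed.
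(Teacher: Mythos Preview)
Your proposal is correct and follows essentially the same approach as the paper. The paper's own proof is in fact terser: it only argues exhaustiveness (via Proposition~\ref{bge1}, exactly as you do, by splitting on whether $\mathbf z_{n+k+1}$ begins with a suffix $U'$ of $U$ or a suffix $V'$ of $V$ and then unfolding the next one or two choices between $UU$ and $UV^bU$), and leaves the nine $r$-bounds to direct inspection; your explicit bookkeeping of starting positions and your description of the two repetition mechanisms (periodicity of a repeated block, and completion of the initial partial cycle to a full one) simply spell out what the paper takes for granted.
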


\begin{proof}
By Proposition~\ref{bge1}, every subword of $\bfz_{n+k+1}$ starting with $U$ is 
a concatenation of the two words $UU$ and $UV^bU$.
Therefore, if $\bfz_{n+k+1}$ starts with $U'$, then
the prefix of $\bfz_{n+k+1}$ is 
either $U' UU$, $U' UV^bU$ or $U' V^bU$.
The second and the third cases are divided into the three subcases respectively: 
$U' UV^bUUU$, $U' UV^bUUV^bU$, $U' UV^bUV^bU$, 
and
$U' V^bUUU$, $U' V^bUUV^bU$, $U'V^bUV^bU$. 

If $\bfz_{n+k+1}$ starts with $V'$, a suffix of $V$, then
the prefix of $\bfz_{n+k+1}$ is either 
$V' V^cUU$ or $V' V^cUV^bU$ for some $c \le b-1$, thus giving the last two cases. 
\end{proof}

\begin{proposition}\label{prop:unbounded}
Let $1 \le \rho < 4/3$ be defined as in Lemma \ref{skln}. Moreover, let
$n$ be large enough that $\ell \ge 3$.
Let $\mathcal G'_n(\bfx)$ be of configuration $(w,U,V)$ with multiplicity $b \ge 1$.
Suppose that $|U|_n = k > \ell =|V|_n$.
Then there exists $m$ with $n \le m < n + 3k + b\ell$ satisfying that
$$
r(m,\bfx) < (1 - \delta ) m.
$$
\end{proposition}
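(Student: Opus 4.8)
The plan is to combine the case analysis of Lemma~\ref{cases}, which describes the nine possible shapes of the prefix of $\mathbf z_{n+k+1}$, with the quantitative estimates of Lemma~\ref{ubbounded}, which are available precisely because the standing hypothesis is $\ell < k$. Throughout I would write $s = s_{n+k+1}$ and abbreviate $S = s + 2k + b\ell$, so that under $\ell < k$ the bound \eqref{deltabound3} takes the compact form $S < (1-\delta)(n+2k)$. The target is to produce a single index $m$ with $n \le m < n + 3k + b\ell$ for which $r(m,\bfx) < (1-\delta)m$. Note that the largest index supplied by Lemma~\ref{cases} is $n + 2k + b\ell + k'$ (in case (3)), and since $k' < k$ this already explains the endpoint $n + 3k + b\ell$; a one-line inspection of the remaining cases, using $k' < k$ and $c\ell + \ell' < b\ell$ (valid since $c < b$ and $\ell' < \ell$), shows that the range requirement $m < n + 3k + b\ell$ is automatic in every case.

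First I would dispatch the seven single-bound cases (1), (3), (4), (5), (7), (8), (9). In each of these Lemma~\ref{cases} supplies exactly one inequality of the form $r(m,\bfx) \le s + (\text{explicit quantity})$, and I would feed \eqref{deltabound3} into it. For case (3) the estimate $r(n+2k+b\ell+k',\bfx) \le S$ closes immediately, since $S < (1-\delta)(n+2k) \le (1-\delta)(n+2k+b\ell+k')$ by monotonicity. Each of the other six reduces, after rewriting the relevant quantity in terms of $S$, applying \eqref{deltabound3}, and subtracting $(1-\delta)n$ from both sides, to an elementary inequality whose left-hand side is $\le 0$ (a combination of $-\delta k$, $-b\ell$, or $k'-k$) and whose right-hand side is a nonnegative multiple of $(1-\delta)$; these are trivially true because $0 < \delta < 1$. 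Here the elementary facts $k' < k$ and $c\ell + \ell' < b\ell$ are used to control the terms that \eqref{deltabound3} does not see directly.

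The two genuinely two-sided cases (2) and (6) are where the dichotomy \eqref{deltabound22}--\eqref{deltabound21} of Lemma~\ref{ubbounded} enters, and this is the crux. For the value of $k'$ prescribed by the shape of $\mathbf z_{n+k+1}$, exactly one of \eqref{deltabound22} and \eqref{deltabound21} holds, and Lemma~\ref{cases} conveniently offers two recurrence bounds, one matched to each branch. In case (2): if \eqref{deltabound21} holds I would use $r(n+2k,\bfx) \le S + k'$ at $m = n+2k$, while if \eqref{deltabound22} holds I would use $r(n+k',\bfx) \le s + k$ at $m = n+k'$, the latter closing via $-\delta(k+b\ell) < 0$. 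Case (6) is symmetric: branch \eqref{deltabound22} pairs with $r(n+k+b\ell+k',\bfx) \le S$, and branch \eqref{deltabound21} pairs with $r(n+(b-1)\ell,\bfx) \le s + \ell + k'$, the one non-immediate estimate being $-2\delta k - (2-\delta)(b-1)\ell < 0$, valid because $\delta < 1$.

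The main obstacle is not any single inequality but the bookkeeping: across all nine cases one must track exactly which index $m$ each recurrence bound refers to, confirm $m \in [n, n+3k+b\ell)$, and pair it with the correct one of \eqref{deltabound3}, \eqref{deltabound22}, \eqref{deltabound21}. The structure of Lemma~\ref{cases}—single bounds in seven cases and matched pairs in cases (2) and (6)—is tuned precisely so that \eqref{deltabound3} closes the single-bound cases and the dichotomy closes the paired ones, and verifying this matching is the real content of the argument.
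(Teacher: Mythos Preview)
Your proposal is correct and follows essentially the same case-by-case route as the paper's proof: go through the nine prefixes of Lemma~\ref{cases}, close cases (2) and (6) with the dichotomy \eqref{deltabound22}/\eqref{deltabound21}, and close the remaining seven with a single bound. The only cosmetic difference is that the paper invokes the weaker \eqref{deltabound1} in cases (1), (3), (4), (7) where you uniformly use the sharper \eqref{deltabound3} (legitimate here since $\ell<k$); also, Lemma~\ref{ubbounded} guarantees that \emph{at least} one of \eqref{deltabound22}, \eqref{deltabound21} holds rather than exactly one, but this does not affect your argument.
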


\begin{proof}
By Lemma~\ref{cases}, we have 
\begin{enumerate}
\item If $\mathbf z_{n+k+1} = U'UU \dots $,  
then by \eqref{deltabound1} we have
\begin{align*}
r(n+ k+ k', \bfx) &\le s_{n+k+1} + k <
\left( 1 - \delta \right) (n + k ) 
\leq 
\left( 1 - \delta \right) (n + k + k').
\end{align*}

\item If $\mathbf z_{n+k+1} = U'U V^b UUU \dots $, then
by \eqref{deltabound22} or \eqref{deltabound21},  
we have either
$$r(n+ k', \bfx) \le s_{n+k+1} + k < (1- \delta)(n + k')$$
or 
$$r(n+ 2k, \bfx) \le s_{n+k+1} + 2k + b\ell + k' < (1- \delta)(n + 2k).$$

\item If $\mathbf z_{n+k+1} = U'U V^b UU V^b U \dots $, then by \eqref{deltabound1}
\begin{align*}
r(n+ 2k+ b\ell+k', \bfx) &\le s_{n+k+1} + 2k + b\ell \\
&< (1-\delta)(n+ 2k+ b\ell) \leq (1-\delta)(n+ 2k+ b\ell+k'). 
\end{align*}

\item If $\mathbf z_{n+k+1} = U'U V^b U V^b U \dots $, then by \eqref{deltabound1}
\begin{align*}
r(n+ 2k+ b\ell, \bfx) &\le s_{n+k+1} + k + b\ell + k' \\
&\le s_{n+k+1} + 2k + b\ell < (1- \delta)(n + 2k + b\ell).
\end{align*}

\item If $\mathbf z_{n+k+1} = U' V^b UUU \dots $, then by \eqref{deltabound3}
\begin{align*}
r(n+ 2k, \bfx) \le s_{n+k+1} + k + b\ell + k' < s_{n+k+1} + 2k + b\ell
< (1- \delta)(n + 2k). 
\end{align*}

\item If $\mathbf z_{n+k+1} = U' V^b UU V^b U \dots $, 
then by \eqref{deltabound21} or \eqref{deltabound22},
we have either
$$r(n+ (b-1)\ell, \bfx) \le s_{n+k+1} + \ell + k'
\le (1- \delta)n \leq (1- \delta)(n+ (b-1)\ell)$$
or 
$$r(n+ k+ b\ell+ k', \bfx) \le s_{n+k+1} + 2k + b\ell
< (1- \delta)(n + k + b\ell + k').$$

\item If $\mathbf z_{n+k+1} = U' V^b U V^b U \dots $, 
then by \eqref{deltabound1}
\begin{align*}
r(n+ k+ b\ell+ k', \bfx) &\le s_{n+k+1} + k + b\ell \\
&< (1- \delta)(n + k + b\ell) \leq (1- \delta)(n + k + b\ell +k').
\end{align*}

\item If $\mathbf z_{n+k+1} = V' V^c UU \dots $, 
then by \eqref{deltabound3}
\begin{align*}
r(n+ k, \bfx) \le s_{n+k+1} + k + c\ell + \ell' \le s_{n+k+1} + k + b\ell < (1- \delta)(n + k). 
\end{align*}

\item If $\mathbf z_{n+k+1} = V' V^c U V^b U \dots $, then by \eqref{deltabound3}
\begin{align*}
r(n+ k+ c\ell+ \ell', \bfx) \le s_{n+k+1} + k + b\ell 
< (1- \delta)(n + k) \le (1- \delta)(n+ k+ c\ell+ \ell'). 
\end{align*}

\end{enumerate}
\end{proof}

\begin{proof}[Proof of Theorem~\ref{thm2}]
By Proposition~\ref{prop:cycles}, $\bfx$ is either of bounded cycle with multiplicity 1 for large $n_i$ or unbounded cycle with infinitely many $n_i$'s with $|U_{(i)}| > |V_{(i)}|$.
By Propositions \ref{prop:bounded} and \ref{prop:unbounded}, we 
get the first assertion. 
Set 
$$
\mathrm{rep} (\bfx) = \liminf_{n \to \infty} \frac{r(n,\mathbf x)}{n}.
$$
By \cite{BuKi17}*{Lemma 3.6} (recall that the definition of $\mathrm{rep}$ is different there), we have
$$
\mu \Bigl( \, \sum_{j \ge 1} \, \frac{x_j}{b^j} \, \Bigr)  \ge 
1 + \frac{ 1 }{\mathrm{rep} (\bfx) }.
$$
This implies the second assertion.  
\end{proof}

\section*{Acknowledgment}
The authors are thankful to Julien Cassaigne for introducing them to Aberkane's papers.
The second author's work was supported by the JSPS KAKENHI Grant Number 24K06641.
The third author was supported by the National Research Foundation of Korea (NRF-2018R1A2B6001624, RS-2023-00245719).

\begin{bibdiv}
\begin{biblist}

\bib{Aber00}{article}{
   author={Aberkane, Ali},
   title={Exemples de suites de complexit\'e{} inf\'erieure \`a{} $2n$},
   language={French, with French summary},
   note={Journ\'ees Montoises d'Informatique Th\'eorique (Marne-la-Vall\'ee,
   2000)},
   journal={Bull. Belg. Math. Soc. Simon Stevin},
   volume={8},
   date={2001},
   number={2},
   pages={161--180},
   issn={1370-1444},
   review={\MR{1838940}},
}

\bib{Aber03}{article}{
   author={Aberkane, Ali},
   title={Words whose complexity satisfies $\lim\frac{p(n)}{n}=1$},
   note={Words},
   journal={Theoret. Comput. Sci.},
   volume={307},
   date={2003},
   number={1},
   pages={31--46},
   issn={0304-3975},
   review={\MR{2014729}},
   doi={10.1016/S0304-3975(03)00091-4},
}

\bib{Adam10}{article}{
   author={Adamczewski, Boris},
   title={On the expansion of some exponential periods in an integer base},
   journal={Math. Ann.},
   volume={346},
   date={2010},
   number={1},
   pages={107--116},
   issn={0025-5831},
   review={\MR{2558889}},
   doi={10.1007/s00208-009-0391-z},
}
\bib{AdBu11}{article}{
   author={Adamczewski, Boris},
   author={Bugeaud, Yann},
   title={Nombres r\'eels de complexit\'e{} sous-lin\'eaire: mesures
   d'irrationalit\'e{} et de transcendance},
   language={French, with English summary},
   journal={J. Reine Angew. Math.},
   volume={658},
   date={2011},
   pages={65--98},
   issn={0075-4102},
   review={\MR{2831513}},
   doi={10.1515/CRELLE.2011.061},
}

\bib{ArRa91}{article}{
   author={Arnoux, Pierre},
   author={Rauzy, G\'erard},
   title={Repr\'esentation g\'eom\'etrique de suites de complexit\'e{}
   $2n+1$},
   language={French, with English summary},
   journal={Bull. Soc. Math. France},
   volume={119},
   date={1991},
   number={2},
   pages={199--215},
   issn={0037-9484},
   review={\MR{1116845}},
}

\bib{AlSh03}{book}{
   author={Allouche, Jean-Paul},
   author={Shallit, Jeffrey},
   title={Automatic sequences},
   note={Theory, applications, generalizations},
   publisher={Cambridge University Press, Cambridge},
   date={2003},
   pages={xvi+571},
   isbn={0-521-82332-3},
   review={\MR{1997038}},
   doi={10.1017/CBO9780511546563},
}

\bib{BeHoZa06}{article}{
   author={Berth\'e, Val\'erie},
   author={Holton, Charles},
   author={Zamboni, Luca Q.},
   title={Initial powers of Sturmian sequences},
   journal={Acta Arith.},
   volume={122},
   date={2006},
   number={4},
   pages={315--347},
   issn={0065-1036},
   review={\MR{2234421}},
   doi={10.4064/aa122-4-1},
}

\bib{BuKi17}{article}{
   author={Bugeaud, Yann},
   author={Kim, Dong Han},
   title={On the $b$-ary expansions of $\log(1+\frac 1a)$ and ${\rm e}$},
   journal={Ann. Sc. Norm. Super. Pisa Cl. Sci. (5)},
   volume={17},
   date={2017},
   number={3},
   pages={931--947},
   issn={0391-173X},
   review={\MR{3726831}},
}

\bib{BuKi19}{article}{
   author={Bugeaud, Yann},
   author={Kim, Dong Han},
   title={A new complexity function, repetitions in Sturmian words, and
   irrationality exponents of Sturmian numbers},
   journal={Trans. Amer. Math. Soc.},
   volume={371},
   date={2019},
   number={5},
   pages={3281--3308},
   issn={0002-9947},
   review={\MR{3896112}},
   doi={10.1090/tran/7378},
}

\bib{BuKi25}{article}{
   author={Bugeaud, Yann},
   author={Kim, Dong Han},
   title={On the $b$-ary expansion of a real number whose irrationality exponent is close to $2$},
   note={preprint},
   eprint={arXiv:2510.02059 [math.NT]},
}

\bib{Cass98}{article}{
    author={Cassaigne, Julien},
    title={Sequences with grouped factors},
    conference={
        title={DLT'97, Developments in
        Language Theory III},
        address={Thessaloniki},
    },
    book={
        publisher={Aristotle University of Thessaloniki}, 
    },
    date={1998}, 
    pages={211--222},
}

\bib{CaNi10}{article}{
   author={Cassaigne, Julien},
   author={Nicolas, Fran\c cois},
   title={Factor complexity},
   conference={
      title={Combinatorics, automata and number theory},
   },
   book={
      series={Encyclopedia Math. Appl.},
      volume={135},
      publisher={Cambridge Univ. Press, Cambridge},
   },
   isbn={978-0-521-51597-9},
   date={2010},
   pages={163--247},
   review={\MR{2759107}},
}

\bib{CrPa23}{article}{
   author={Creutz, Darren},
   author={Pavlov, Ronnie},
   title={Low complexity subshifts have discrete spectrum},
   journal={Forum Math. Sigma},
   volume={11},
   date={2023},
   pages={Paper No. e96, 22},
   review={\MR{4655530}},
   doi={10.1017/fms.2023.95},
}

\bib{CrPa}{article}{
    author={Creutz, Darren},
    author={Pavlov, Ronnie},
    title={On minimal subshifts of linear word complexity with slope less than 3/2},
    date={2023},
    eprint={arXiv:2308.14901 [math.DS]},
}


\bib{Fogg02}{collection}{
   author={Fogg, N. Pytheas},
   title={Substitutions in dynamics, arithmetics and combinatorics},
   series={Lecture Notes in Mathematics},
   volume={1794},
   editor={Berth\'e, V.},
   editor={Ferenczi, S.},
   editor={Mauduit, C.},
   editor={Siegel, A.},
   note={Edited by V.\ Berth\'e, S.\ Ferenczi, C.\ Mauduit and A.\ Siegel},
   publisher={Springer-Verlag, Berlin},
   date={2002},
   pages={xviii+402},
   isbn={3-540-44141-7},
   review={\MR{1970385}},
   doi={10.1007/b13861},
}

\bib{Hein02}{article}{
   author={Heinis, Alex},
   title={The $P(n)/n$-function for bi-infinite words},
   note={WORDS (Rouen, 1999)},
   journal={Theoret. Comput. Sci.},
   volume={273},
   date={2002},
   number={1-2},
   pages={35--46},
   issn={0304-3975},
   review={\MR{1872441}},
   doi={10.1016/S0304-3975(00)00432-1},
}

\bib{Loth02}{book}{
   author={Lothaire, M.},
   title={Algebraic combinatorics on words},
   series={Encyclopedia of Mathematics and its Applications},
   volume={90},
   publisher={Cambridge University Press, Cambridge},
   date={2002},
   pages={xiv+504},
   isbn={0-521-81220-8},
   review={\MR{1905123}},
   doi={10.1017/CBO9781107326019},
}

\bib{MoHe40}{article}{
   author={Morse, Marston},
   author={Hedlund, Gustav A.},
   title={Symbolic dynamics II. Sturmian trajectories},
   journal={Amer. J. Math.},
   volume={62},
   date={1940},
   pages={1--42},
   issn={0002-9327},
   review={\MR{0000745}},
   doi={10.2307/2371431},
}

\bib{Rauz83}{article}{
author={Rauzy, G\'{e}rard},
   title={Suites \`a termes dans un alphabet fini},
   language={French},
   conference={
      title={Seminar on number theory, 1982--1983},
      address={Talence},
      date={1982/1983},
   },
   book={
      publisher={Univ. Bordeaux I, Talence},
   },
   date={1983},
   pages={Exp. No. 25, 16},
   review={\MR{750326}},
}

\end{biblist}
\end{bibdiv}

\end{document}